\providecommand{\U}[1]{\protect\rule{.1in}{.1in}}
\newtheorem{theorem}{Theorem}
\newtheorem{corollary}[theorem]{Corollary}
\newtheorem{lemma}[theorem]{Lemma}
\newtheorem{proposition}[theorem]{Proposition}
\newtheorem{remark}[theorem]{Remark}
\begin{document}
\date{\today}
\title{Huygens Synchronization of Three Aligned Clocks}
\author{Jorge Buescu$^1$, Emma D'Aniello$^2$ and Henrique M. Oliveira$^{3*}$}
\thanks{$^1$jsbuescu@ciencias.ulisboa.pt; Mathematics Department, Faculty of Sciences, CMAFCIO - Center for Mathematics, Fundamental Applications and Operational Research, University of Lisbon, Campo Grande, 1749-006 Lisbon, Portugal.}
\thanks{$^2$emma.daniello@unicampania.it; ORCID: 0000-0001-5872-0869 Dipartimento di Matematica e Fisica, Universit\`{a} degli Studi della Campania
\textquotedblleft Luigi Vanvitelli\textquotedblright, Viale Lincoln n. 5 -
81100 Caserta, Italia}
\thanks{$^3$henrique.m.oliveira@tecnico.ulisboa.pt; ORCID: 0000-0002-3346-4915
Department of Mathematics \and Center for Mathematical Analysis, Geometry and
Dynamical Systems, Instituto Superior T\'ecnico, University of Lisbon, Av.
Rovisco Pais, 1049-001, Lisbon, Portugal;\\$^*$ Corresponding author;  }

\subjclass{Primary 34D06, Secondary 37E30}
\keywords{Synchronization of oscillators, Stability, Andronov pendulum clocks, Mutual symmetric impact interaction}
\thanks{The author JB was partially supported by  Fundação para a Ciência e
Tecnologia, Fundação para a Ciência e Tecnologia, UIDB/04561/2020. The author ED was partially supported by the program Erasmus+, the project 2024 DYNAMIChE of the INdAM Group GNAMPA, the project PRIN 2022 QNT4GREEN, and the group UMI-TAA: Approximation Theory and Applications. The author HMO was partially supported by  Fundação para a Ciência e
Tecnologia, UIDB/04459/2020 and UIDP/04459/2020.
\\
Disclosure of interest: The authors report no conflict of interest.
}

\begin{abstract}
This study examines the synchronization of three identical oscillators
arranged in an array and coupled by small impacts, wherein each oscillator
interacts solely with its nearest neighbor. The synchronized state, which is
asymptotically stable, is
characterized by phase opposition among alternating oscillators. We analyze the system 
using a non-linear discrete dynamical system based on a difference equation derived from 
the iteration
of a plane diffeomorphism. We illustrate these results with the application to a system of three aligned Andronov clocks, showcasing their applicability to a broad range of oscillator systems.

\end{abstract}

\maketitle




\section{Introduction}

Synchronization of oscillators is an extremely rich topic in Dynamical Systems, being treated by a very substantial body of literature across diverse 
research domains \cite{Luo2009, Luo2013, Pit, strogatz2004}.  The recently published book \cite{GolStewrt2023} 
provides a general framework for synchronization in symmetrical networks. 
In a recent publication \cite{EH2}, two of the authors investigated the
synchronization of three oscillators interacting symmetrically. The model
incorporated the Andronov \cite{And} pendulum clock, as used in \cite{OlMe}.
The theory presented in \cite{EH2} relies only on systems having limit cycles
and small interactions between oscillators once per cycle, ensuring
applicability irrespective of specific details of the oscillator models. This paper focused on
constructing a model and its subsequent analysis, yielding two possible locked
states dependent on initial conditions. The approach involved deriving a
planar difference equation for independent phase differences $(x,y)$ relative
to a reference oscillator (that the authors called clock $A$), with the resulting map
being a diffeomorphism of the plane:%

\begin{equation}%
\begin{array}
[c]{cccc}%
\Omega: &
\mathbb{R}^{2} & \ \ \ \longrightarrow  \ \  \ 
\mathbb{R}
^{2}  &  \\
&
\begin{bmatrix}
x\\
y
\end{bmatrix}
& \longmapsto &
\begin{bmatrix}
x+2a\sin x+a\sin y+a\sin(x-y)\\
y+a\sin x+2a\sin y+a\sin(y-x)
\end{bmatrix}
.
\end{array}
\label{eq:01}%
\end{equation}

In the context of this proposed \textit{triangle model}, the dynamics corresponding to 
 \eqref{eq:01} exhibits two asymptotic attractors with a $120^{\circ}$
phase difference, occurring either clockwise or counterclockwise, depending on
initial conditions.

Motivated by the mentioned previous work, the present study explores the behavior of a
linear arrangement of oscillators, investigating interactions solely between
nearest neighbors. Instead of a triangle model, we now deal with a string of
three oscillators and address questions regarding the synchronization
dynamics. In this paper, we do not focus on the construction of the model,
already performed in \cite{EH2} in the general setting; instead, we explore directly 
the dynamical system corresponding to the linear arrangement of oscillators
with nearest neighbor interaction.

The configuration involves three oscillators (\textit{B-A-C}) along a line
segment. Oscillator $B$ interacts only with the central oscillator $A$;
 the central oscillator $A$ interacts with $B$ and $C$; and oscillator $C$
interacts only with $A$. Thus $B$ and $C$ remain unconnected by a direct
perturbation. This nearest neighbor setting, common in various scientific
disciplines, provides a reasonable approximation of oscillators placed in a
linear arrangement, particularly for mechanical or electronic systems where
interactions decay rapidly with distance.

Unlike the symmetrical triangle model, the linear model presents additional
complexity in analysis due to the absence of certain symmetries and trivial
invariant sets in the phase space as explored in \cite{EH2}. While the
triangle model benefited from straight-line saddle-node heteroclinics, the
linear model lacks such features.

It is essential to note that the model we present is {\em perturbative}, excluding
macroscopic momentum exchange between oscillators. This contrasts with
scenarios involving movable supports, as explored in
\cite{Benn,Col,Frad,Jova,Col2,Martens,Oud,Sen}, which involve permanent
momentum exchange. Moreover, in the present article, we consider that the
oscillators are modeled by second-order differential equations with limit
cycles, which is different from the \textit{integrate-and-fire synchronization} in
\cite{Campbell1997,Campbell1999,Kuramoto1975,Mirollo1990,Strogatz2000}, where
first-order equations are employed. This perturbative, or long-time
synchronization, i.e., secular, approach aligns with previous works such as
\cite{Abr2,Abr,EH2,OlMe,Vass}.

This paper is organized as follows. In the second section, we
recall from \cite{EH2} a model for the phase difference in a row of three
identical pendulum clocks attached to a rigid wall, resulting in a
two-dimensional discrete dynamical system. In the third section, we study the
properties of invariance and symmetry of this dynamical system, reducing the
study to an invariant set. In the fourth section, we analyze the asymptotic
stability in the specific invariant set, obtaining the fundamental result. The
concluding section summarizes the results, suggests generalizations, and
outlines future research.

\section{Model for Three Aligned Clocks}

We consider 
a system composed of three identical oscillators in a linear arrangement, aligned so that the left one and the right one are at the same distance from the middle one, and with a weak nearest-neighbor interaction. We refer to these oscillators as clocks, since we assume isochronism of each oscillator when isolated from perturbations, and we may conceive they are mounted on the same wall. We may assume, in particular, that
those clocks obey the classical Andronov model \cite{And}; however, the
synchronization theory we develop is independent
of the particular model of ODEs governing the oscillators. Indeed, 
any system of three identical planar ODEs with a limit cycle with weak burst pulses perturbing
nearest neighbors once per turn satisfies the hypotheses of this article.

In \cite{EH2} we discuss the case of Andronov oscillators disposed in a triangular, symmetrical setting. When  clock $A$ attains a critical position in its
limit cycle, it receives an internal kick from its escape mechanism
\cite{OlMe}; the impact then propagates along the wall, slightly perturbing both
 clocks $B$ and $C$. The same happens in turn when $B$ or $C$ arrive at
the critical position in their corresponding limit cycles, interacting always with the
other two clocks.
The perturbation is assumed to be instantaneous since the time of travel of 
the mechanical waves in the wall between the clocks, corresponding to the speed of sound, is very small compared to the period of the clocks, as discussed in \cite{OlMe,EH2}.

In the construction of the dynamical system for phase differences of Andronov
clocks arranged in a line with nearest neighbor interaction, we assume the
hypotheses of \cite{EH2}, the only exception being, as mentioned above, that there is no direct
interaction between oscillators $B$ and $C$ . This is the
only, but crucial, difference between this model and the all-to-all interaction
between oscillators. In the case of interaction between all
 three clocks, it is shown in \cite{EH2} that, denoting the 
two independent phase differences between oscillators $B$ and $C$ relative to the central clock $A$ respectively by $x$ and $y$,  the dynamics is described by the discrete system
\begin{equation}%
\begin{bmatrix}
x_{n+1}\\
y_{n+1}%
\end{bmatrix}
=\Omega%
\begin{bmatrix}
x_{n}\\
y_{n}%
\end{bmatrix}
=%
\begin{bmatrix}
x_{n}+2a\sin x_{n}+a\sin y_{n}+a\sin\left(  x_{n}-y_{n}\right) \\
y_{n}+a\sin x_{n}+2a\sin y_{n}+a\sin\left(  y_{n}-x_{n}\right)
\end{bmatrix}
. \label{eq: 02}%
\end{equation}

\noindent Notice that the terms $\left(  x_{n}-y_{n}\right)  $ in equations
(\ref{eq: 02}) account for the direct interaction between the outer clocks $B$
and $C$. Since in the linear nearest neighbor
setting this interaction does not exist,  the corresponding model is obtained 
 simply by setting the corresponding term to zero. We thus obtain the equations for three aligned clocks with nearest neighbor interaction:%

\begin{equation}%
\begin{bmatrix}
x_{n+1}\\
y_{n+1}%
\end{bmatrix}
=F%
\begin{bmatrix}
x_{n}\\
y_{n}%
\end{bmatrix}
=%
\begin{bmatrix}
x_{n}+2a\sin x_{n}+a\sin y_{n}\\
y_{n}+a\sin x_{n}+2a\sin y_{n}%
\end{bmatrix}
. \label{Eq3}%
\end{equation}

\noindent It will be useful, for later purposes, to write this system as a perturbation of the identity. Defining
\begin{equation}
\label{eq_f}
f(x,y)= x + 2a \sin x + a \sin y
\end{equation}
and
\begin{equation}
\label{eq_varphi}
\varphi(x,y)=  2 \sin x + \sin y
\end{equation}
we may write system \eqref{Eq3} as

\begin{align*}
F%
\begin{bmatrix}
x_n\\
y_n
\end{bmatrix}
&  =%
\begin{bmatrix}
f\left(  x_{n},y_{n}\right) \\
f\left(  y_{n},x_{n}\right)
\end{bmatrix}
\\
&  =%
\begin{bmatrix}
1 & 0\\
0 & 1
\end{bmatrix}%
\begin{bmatrix}
x_{n}\\
y_{n}%
\end{bmatrix}
+a%
\begin{bmatrix}
\varphi(x_{n},y_{n})\\
\varphi(y_{n},x_{n})
\end{bmatrix}
.
\end{align*}

We 
consider the parameter $0 < a \ll 1$ small\footnote{For the case of the Andronov clock, it is possible to show that 
$a=\frac{\alpha\mu}{8h^{2}}$, where $\alpha$ is the coupling constant, with
units of force, $\mu$ is the dry friction coefficient of the gears of the
internal mechanism of the clocks, and $\frac{h^{2}}{2}$ is the kinetic energy
given to the pendulum in a complete turn, which compensates the energy loss
due to friction.}, as discussed in detail in \cite{EH2,OlMe}. 

\subsection{Basic Analysis of the Model}

We proceed by analyzing the properties of the dynamical system of the phase
differences of the three aligned clocks.

\paragraph{Jacobian Matrix}

The Jacobian matrix of $F$ is given by
\[%
DF(x,y) = \, 
\begin{bmatrix}
1+2a\cos x & a\cos y\\
a\cos x & 1+2a\cos y
\end{bmatrix}
,
\]
with Jacobian determinant
\[
JF\left(  x,y\right)  =1+2a\left(  \cos y+\cos x\right)  +3a^{2}\cos x\cos y.
\]
Thus, the Jacobian is greater than $0$ for small positive $a$. Indeed, 
 it may be easily shown that this holds for all $x,y$ if $a < \frac{1}{6}$. The inverse function theorem then ensures that, for these values of the parameter  $a$, the map $F$ is an analytic diffeomorphism. From this point onwards we shall suppose throughout that $0 < a < \frac{1}{6}$.

The eigenvalues of $JF(x,y)$ are given by the characteristic equation
\[
(1+2a\cos x-\lambda)(1+2a\cos y-\lambda)-{a}^{2}\cos x\cos y=0\text{,}%
\]
or equivalently
\[
\lambda^{2}-2\lambda\left[  1+a\left(  \cos x+\cos y\right)  \right]
+2a\left(  \cos y+\cos x\right)  +3a^{2}\cos x\cos y+1=0.
\]

\noindent The map $F$ has four families of fixed points indexed by two integers $k, l \in \mathbb{Z}$:
\[
\begin{array}{lll}
P_0^{l,k} & = & \left(  2l\pi ,2k\pi \right) \\
P_1^{l,k} & = & \left(  2l\pi ,\pi + 2k\pi \right) \\
P_2^{l,k} & = & \left( \pi + 2l\pi ,  2k\pi \right) \\
P_3^{l,k} & = & \left( \pi + 2l\pi , \pi + 2k\pi \right)
\end{array} \]

%
%
%

\noindent A  the fixed points and the Jacobian matrix are $2\pi$-periodic in both coordinates, we select for each family of fixed points  the representative with $k = l =0$ (dropping the upper indices) and perform the  linear stability analysis, which then carries over to the corresponding family indexed by $k,l \in \mathbb{Z}$.

 For $P_0$ the eigenvalues are ${\lambda}_{1}=1+a$ and ${\lambda}_{2}=1+3a$, 
and thus $P_0$ is a hyperbolic repellor. At both $P_1$ and $P_2$ the eigenvalues are 
 ${\lambda}_{1}=1+a\sqrt{3}$ and ${\lambda}_{2}=1-a\sqrt{3}$; hence, both are hyperbolic saddle points. At $P_3$ the eigenvalues are ${\lambda
}_{1}=1-3a$ and ${\lambda}_{2}=1-a$; hence, it is a local hyperbolic attractor. 

In the table below we summarize the local classification of the fixed points of each family, for $k,l \in \mathbb{Z}$.

\begin{center}%
\begin{tabular}
[c]{|c|c|c|c|}\hline
Fixed Points & $P_0^{l,k} $ & $P_1^{l,k} $ and $P_2^{l,k} $ & $P_3^{l,k} $\\\hline
Eigenvalues & $1+a,1+3a$ & $1+a\sqrt{3},1-a\sqrt{3}$ & $1-3a,1-a$\\\hline
Classification & Source & Saddle & Sink\\\hline
\end{tabular}

\end{center}

We observe that at the saddle $(\pi,0)$, the eigenvectors of the Jacobian matrix of $F$ define
stable $v_{s}$ and unstable $v_{u}$ directions  by%
\begin{align}
v_{s}\left(  \pi,0\right)   &  =\left(  2+\sqrt{3},1\right)
,\label{eq: stableEV}\\
v_{u}\left(  \pi,0\right)   &  =(2-\sqrt{3},1). \label{eq: unstableEV}%
\end{align}

Since 
$F$ is an invertible map, orbits corresponding to different initial conditions in 
$\mathbb{R}^{2}$ either coincide or do not intersect. There are invariant sets under the dynamics of $F$ which are useful in
the proof of the existence of the attractor $P_3=(\pi,\pi)$; in the sequel, we
construct those sets using the symmetries of $F$.

In what follows we adopt the standard topological notations: given a set $B\subset\mathbb{R}^{2}$,  the interior of $B$ is denoted by $\text{int}(B)$, the closure of $B$ is denoted by $\overline{B}$, and the boundary  of $B$ is denoted by $\partial(B)$.

The main result of this paper is the existence of a bounded, simply
connected open set $A$ in the plane containing $P_3 =(\pi,\pi)$ 
which is the basin of attraction of $P_3$ with the following properties:
\begin{enumerate}
\item the points $(0,0)$,
$(0,2\pi)$, $(2\pi,0)$, and $(2\pi,2\pi)$ belong to the boundary of $A$ and
are repellers in the closure $\overline{A}$, and the points $(0,\pi)$,
$(\pi,0)$, $(\pi,2\pi)$, and $(2\pi,\pi)$ belong to the boundary of $A$ and
are saddles in $\overline{A}$;
\item the boundary of the set $A$ is strongly invariant under
the dynamics of $F$, that is, $F(\partial A)=\partial A$;
\item the collection of plane translations  $\overline{A}^{l,k} = \overline{A} + (2\pi l, 2\pi k)$ of $\overline{A}$ is a tiling of the plane $\mathbb{R}^2$.
\end{enumerate}
The double periodicity of the map $F$ then ensures that in each set $\overline{A}^{l,k}$ 
the dynamics is the same as the dynamics in $\overline{A}$.

\paragraph{Nullclines}

Recalling 
 the notation in \eqref{eq_varphi}, consider  the two implicit curves
which are easily seen to exist in the parameter range of interest) defined by
$
\varphi\left(  x,y\right)  =0
$
and
$
\varphi\left(  y,x\right)  =0.
$
When $\varphi\left(  x,y\right)  =0$, the perturbation field $F$ in \eqref{Eq3} is vertical; when $\varphi\left(  y,x\right)  =0$, the perturbation field $F$ is horizontal. 
It follows that these implicit curves represent the nullclines of $F$.
In Figure \ref{fig: 01}, we plot these nullclines, indicating also the sign of the
perturbation field. Although the sets bounded by the nullclines are not invariant, 
the sign variations strongly suggest that $P_3= (\pi,\pi)$ has the properties
enumerated above.

\begin{figure}[tbh]
\centering
\includegraphics[height=3.556in,width=3.513in]{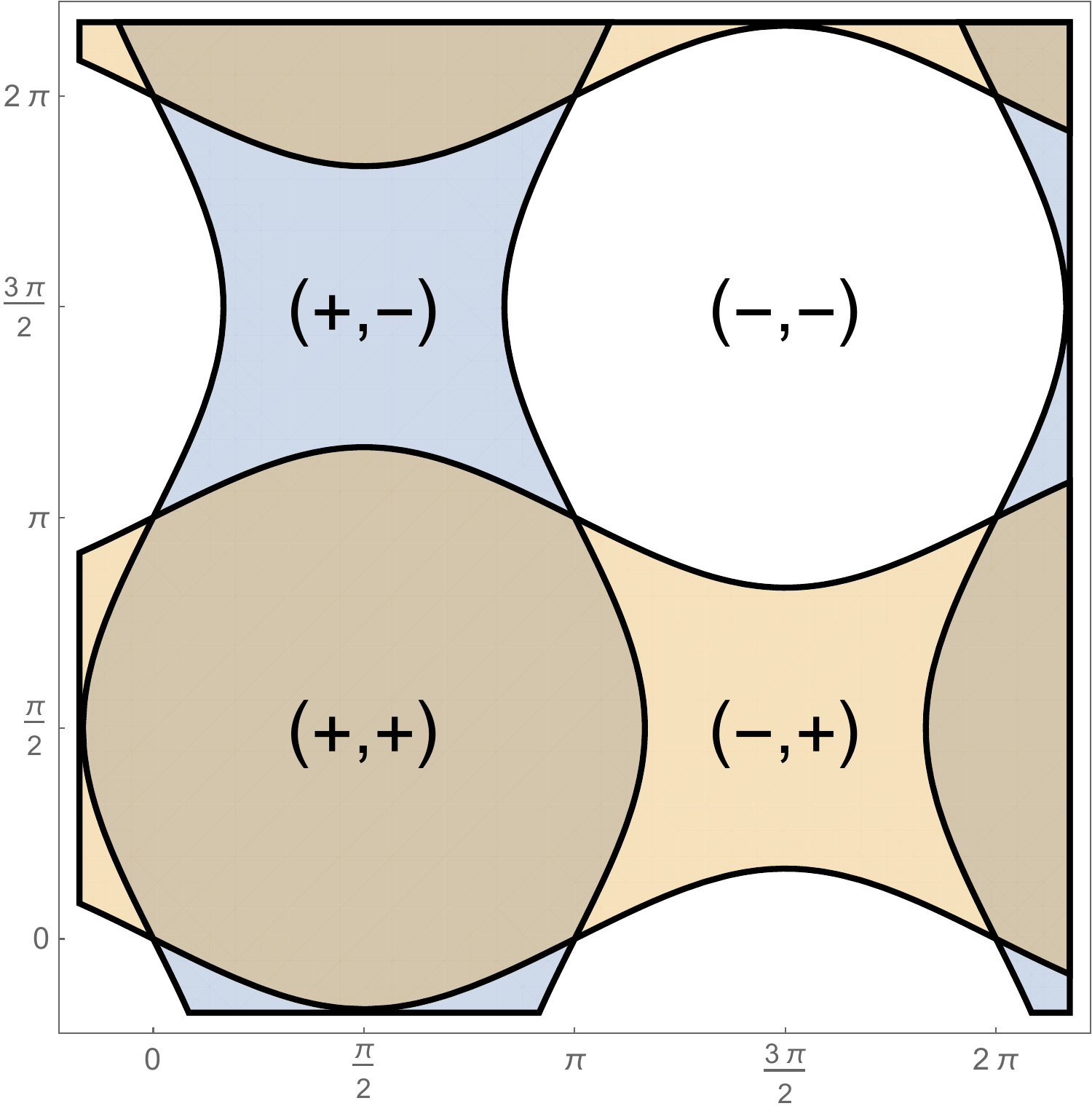}\caption{Nullclines
of the field $F$ for $a=0.1$.}%
\label{fig: 01}%
\end{figure}

\newpage

\section{Invariance and symmetry}
\label{sec_invariance_symmetry}

\subsubsection{Invariant lines}

In order to show the desired result, we first determine two one-parameter families of invariant straight lines as shown in Figure \ref{fig: 02}.

\begin{figure}[ptb]
\begin{center}
\includegraphics[
height=4.4503in,
width=4.4008in
]{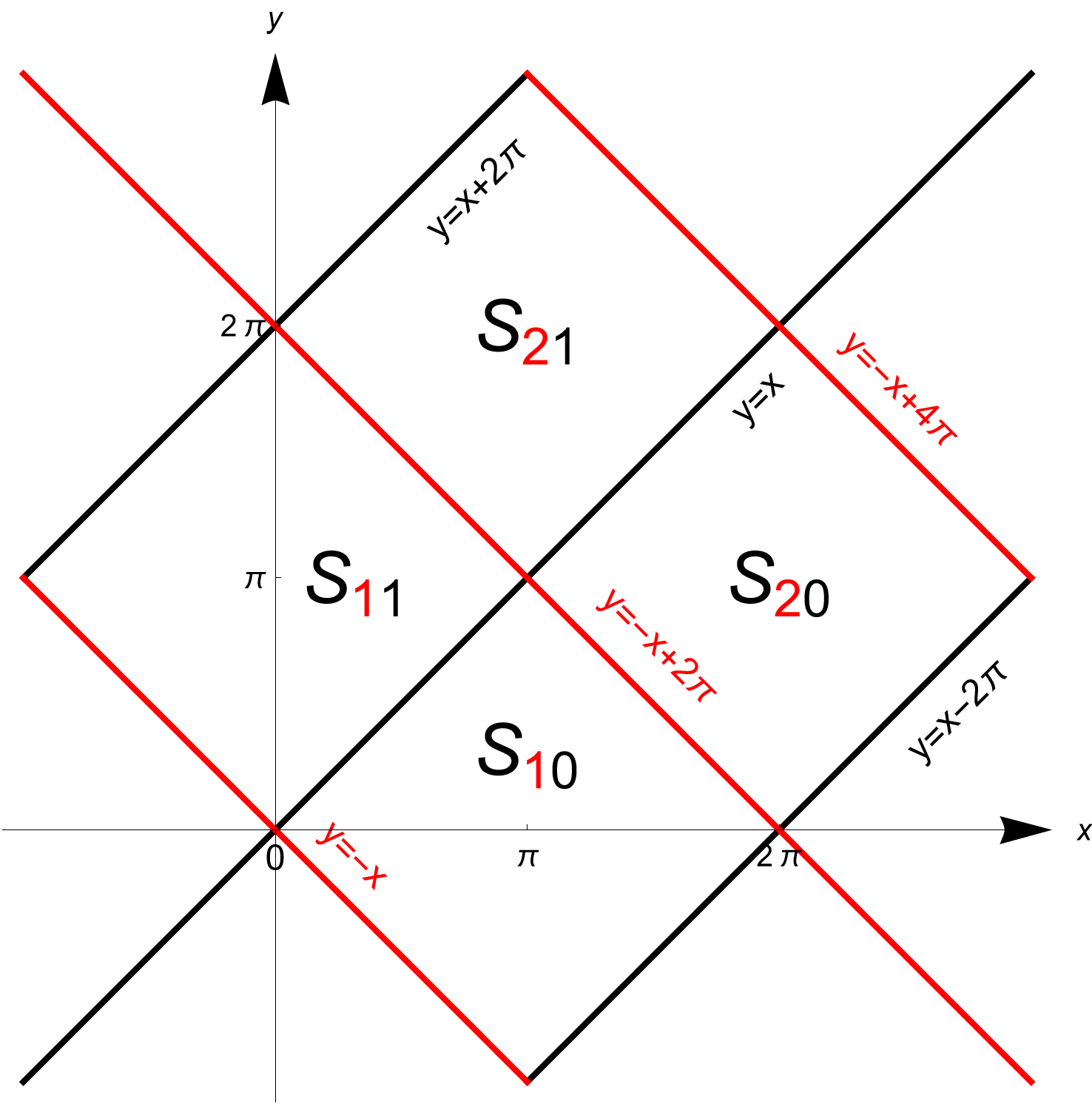}
\end{center}
\caption{Invariant lines near $\left(  \pi,\pi\right)  .$}%
\label{fig: 02}%
\end{figure}

\begin{lemma}
Consider the one-parameter families of straight lines $\Gamma_{k}^{+} = \{ (x,y): 
y=x+2k\pi \}_{k\in\mathbb{Z}}$  and  $\Gamma_{l}^{-} = \{ (x,y): 
y=-x+2l\pi \}_{l\in\mathbb{Z}}$. Then:
\begin{enumerate}
\item[(1)] For each $k \in \mathbb{Z}$ the straight line $\Gamma_{k}^{+}$ is strongly invariant, that is
$ F(\Gamma_{k}^{+}) = \Gamma_{k}^{+}. $
\item[(2)] For each $l \in \mathbb{Z}$ the straight line $\Gamma_{l}^{-}$ is strongly invariant, that is
$ F(\Gamma_{l}^{-}) = \Gamma_{l}^{-}. $
\end{enumerate}
\label{lemma_1}
\end{lemma}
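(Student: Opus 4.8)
The plan is a direct computation that exploits the $2\pi$-periodicity of $\sin$ separately on each of the two families. For the family $\Gamma_{k}^{+}$, I would parametrize a generic point of the line as $(t,\,t+2k\pi)$ and evaluate the two components of $F$ using $\sin(t+2k\pi)=\sin t$. The $x$-increment $2a\sin t + a\sin(t+2k\pi)$ and the $y$-increment $a\sin t + 2a\sin(t+2k\pi)$ both collapse to $3a\sin t$, so $F(t,\,t+2k\pi) = \bigl(g(t),\,g(t)+2k\pi\bigr)$ with $g(t)=t+3a\sin t$, which exhibits the image as a point of $\Gamma_{k}^{+}$ and gives $F(\Gamma_{k}^{+})\subseteq\Gamma_{k}^{+}$. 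For the family $\Gamma_{l}^{-}$, I would parametrize as $(t,\,-t+2l\pi)$ and use $\sin(-t+2l\pi)=-\sin t$; now the $x$-increment equals $a\sin t$ and the $y$-increment equals $-a\sin t$, so $F(t,\,-t+2l\pi)=\bigl(h(t),\,-h(t)+2l\pi\bigr)$ with $h(t)=t+a\sin t$, whence $F(\Gamma_{l}^{-})\subseteq\Gamma_{l}^{-}$.

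To promote forward invariance to the strong invariance $F(\Gamma)=\Gamma$ asserted in the statement, I would observe that the induced one-dimensional maps $g$ and $h$ are surjective onto $\mathbb{R}$. Indeed $g'(t)=1+3a\cos t$ and $h'(t)=1+a\cos t$ are strictly positive in the standing range $0<a<\tfrac16$ (in fact $g'>0$ already for $a<\tfrac13$ and $h'>0$ for $a<1$), so $g$ and $h$ are strictly increasing homeomorphisms of $\mathbb{R}$, hence onto; the parametrizations above then show that the image of each line is the entire line. This completes both (1) and (2).

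I do not anticipate any real obstacle: the lemma reduces to the elementary identities $\sin(x\pm2k\pi)=\sin x$ and $\sin(-x)=-\sin x$ together with the monotonicity already built into the hypothesis $a<\tfrac16$ that makes $F$ a diffeomorphism. The only point deserving explicit mention is that \emph{strongly} invariant means the set equality $F(\Gamma)=\Gamma$, not merely $F(\Gamma)\subseteq\Gamma$, so the surjectivity of $g$ and $h$ should be recorded rather than left implicit. It is worth noting in passing that each $\Gamma_{k}^{+}$ and each $\Gamma_{l}^{-}$ passes through the fixed points of $F$ (for instance $(0,0)$ and $(\pi,\pi)$ lie on $\Gamma_{0}^{+}$, and $(0,0)\in\Gamma_{0}^{-}$, $(\pi,\pi)\in\Gamma_{1}^{-}$), so these invariant lines will serve as natural barriers in the later construction of the basin $A$.
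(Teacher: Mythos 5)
Your computation is exactly the paper's proof: parametrize $\Gamma_k^+$ and $\Gamma_l^-$, use the periodicity and oddness of $\sin$, and read off that the image point lies on the same line. The only difference is that you explicitly record the surjectivity of the induced one-dimensional maps $g(t)=t+3a\sin t$ and $h(t)=t+a\sin t$ to upgrade the inclusion $F(\Gamma)\subseteq\Gamma$ to the asserted equality $F(\Gamma)=\Gamma$, a point the paper leaves implicit (its computation as written only gives the inclusion); this is a small but genuine improvement in rigor, not a different approach.
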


\begin{proof} Recalling the definition of $f$ from \eqref{eq_f}, the proof of (1) follows from a simple computation: given $(x, x+2k\pi) \in \Gamma_{k}^{+}$, 
\begin{align*}
F\left(  \begin{bmatrix}x\\x+2k\pi\end{bmatrix}\right)   &=\begin{bmatrix}x+3a\sin x\\x+3a\sin x+2k\pi\end{bmatrix} \\
&=\begin{bmatrix}f\left(  x,x\right) \\f\left(  x,x\right)  +2k\pi\end{bmatrix}
\end{align*} is also in $\in \Gamma_{k}^{+}$. The proof of (2) is similar, so details are omitted.
\end{proof}

%

\noindent The next result is now a simple consequence of the previous lemma.

\begin{proposition}
The squares $S_{lk}$ defined by
\[
S_{lk}=\left\{ (x,y) :   -x + 2(l-1)\pi \leq y \leq -x + 2l\pi , \   x + 2(k-1)\pi \leq y \leq x + 2k\pi    \right\}
\]
where  ${l,k \in \mathbb{Z}}$, are invariant sets for the dynamics of $F$.
\end{proposition}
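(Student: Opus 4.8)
The plan is to realize each square $S_{lk}$ as the intersection of two closed infinite strips, each one bounded by a consecutive pair of invariant lines from the families of Lemma~\ref{lemma_1}, and then to propagate the invariance of those boundary lines through $F$ using only that $F$ is a homeomorphism of the plane. Concretely, set
\[
R_k^+ = \{(x,y) :\ x + 2(k-1)\pi \le y \le x + 2k\pi\}, \qquad
R_l^- = \{(x,y) :\ -x + 2(l-1)\pi \le y \le -x + 2l\pi\},
\]
so that $S_{lk} = R_k^+ \cap R_l^-$, with $\text{int}(R_k^+)$ being the bounded-width component of $\mathbb{R}^2 \setminus (\Gamma_{k-1}^+ \cup \Gamma_k^+)$ and $\text{int}(R_l^-)$ the analogous component for the pair $\Gamma_{l-1}^-,\Gamma_l^-$.

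First I would record the two ingredients already available: by Lemma~\ref{lemma_1} each of the four lines $\Gamma_{k-1}^+,\Gamma_k^+,\Gamma_{l-1}^-,\Gamma_l^-$ is \emph{strongly} invariant, i.e.\ $F(\Gamma_k^+)=\Gamma_k^+$ and $F(\Gamma_l^-)=\Gamma_l^-$; and, for $0<a<\tfrac16$, $F$ is an analytic diffeomorphism of $\mathbb{R}^2$, as established via the Jacobian. Since $F$ is a homeomorphism fixing the union $\Gamma_{k-1}^+\cup\Gamma_k^+$ setwise, it restricts to a homeomorphism of $\mathbb{R}^2\setminus(\Gamma_{k-1}^+\cup\Gamma_k^+)$ onto itself, hence permutes the three connected components of this open set: the two open half-planes lying outside the strip and the open strip $\text{int}(R_k^+)$ in between. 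The crucial observation is that $\text{int}(R_k^+)$ is the \emph{unique} component whose boundary meets both lines $\Gamma_{k-1}^+$ and $\Gamma_k^+$, whereas each half-plane has its boundary contained in a single one of these lines. Because $F$ fixes each line individually, $F$ must send $\text{int}(R_k^+)$ to a component whose boundary again meets both lines, which forces $F(\text{int}(R_k^+))=\text{int}(R_k^+)$; taking closures (homeomorphisms commute with closure) gives $F(R_k^+)=R_k^+$. The same argument with the family $\Gamma^-$ yields $F(R_l^-)=R_l^-$.

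Finally, since $F$ is a bijection it commutes with intersections, so
\[
F(S_{lk}) = F(R_k^+\cap R_l^-) = F(R_k^+)\cap F(R_l^-) = R_k^+\cap R_l^- = S_{lk},
\]
which is precisely the asserted invariance (in fact strong invariance). One could alternatively use the double periodicity $F(p+(2\pi l,2\pi k))=F(p)+(2\pi l,2\pi k)$ to reduce to finitely many strips, but the topological argument already handles all $k,l$ at once.

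I expect the only genuine subtlety to be excluding the possibility that $F$ "flips" a strip onto an adjacent half-plane or swaps its two boundary lines; this is exactly what the component-counting remark rules out, and it uses nothing beyond $F$ being a homeomorphism together with the strong — not merely weak — invariance of each individual line supplied by Lemma~\ref{lemma_1}. The remaining steps are purely formal.
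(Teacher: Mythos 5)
Your proof is correct, and it uses the same ingredients as the paper (the strongly invariant lines of Lemma~\ref{lemma_1} together with the fact that $F$ is a diffeomorphism for $0<a<\tfrac16$), but it is noticeably more careful at the one point where care is actually needed. The paper's own justification is a one-sentence sketch: since $F$ is invertible, orbits ``cannot cross'' the invariant boundary lines of $S_{lk}$. For a discrete-time map this is not by itself a proof --- injectivity of $F$ prevents distinct orbits from intersecting, but nothing about injectivity alone stops a single iterate from jumping from the interior of the strip over an invariant line into a half-plane. Your component-permutation argument is exactly what closes this gap: $F$ is a homeomorphism of $\mathbb{R}^2$ preserving $\Gamma_{k-1}^+\cup\Gamma_k^+$, hence permutes the three components of the complement, and since $\partial F(U)=F(\partial U)$ the open strip is the unique component whose boundary meets both lines, so it is fixed; taking closures and intersecting the two strips gives $F(S_{lk})=S_{lk}$. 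So the route is the paper's, but your write-up supplies the topological step the paper leaves implicit, and in doing so actually establishes the stronger conclusion $F(S_{lk})=S_{lk}$ rather than mere forward invariance.
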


Due to the invertible nature of $F$, different orbits of the dynamical system do not intersect.  Fix now the integers $l$ and $k$. Consider the curves $\Gamma_{l-1}^{-}$, $\Gamma_{l}^{-}$, $\Gamma_{k-1}^{+}$, and $\Gamma_{k}^{+}$; these four families of curves define the square $S_{lk}$ in $\mathbb{R}^{2}$ whose boundary cannot be crossed by the orbits with initial conditions in the interior of $S_{lk}$.

\subsubsection{Equivariance}

Consider a linear bijection  $\Phi \in GL( \mathbb{R}^2)$ which commutes with $F$, that is 
\[
 F\circ\Phi \, (x,y)=\Phi\circ F \, (x,y) \ \ \ \forall (x,y) \in \mathbb{R}^2.
\]
Then, $F$ is  said to be a $\Phi$-equivariant map\cite{auslander,GS1}. The set of all such $\Phi$ is easily seen to form a group under composition. This group is a linear action of the symmetry group $G$ of the map $F$; with a slight abuse of language we identify this representation with $G$ itself, so that
\[ G = \{ \Phi \in GL(\mathbb{R}^2) : F\circ\Phi \, =\Phi\circ F  \}. \]

The following Proposition summarizes some standard results in equivariant dynamics of which we shall make extensive use; for completeness, we state and prove it in the present context. 
Recall that a set $S$ is strongly $F$-invariant if $F(S)=S$, while an orbit with initial condition $X_0$ is the set  $O_{X_{0}}= \{ \left(
X_{n}\right) \}_{n \in \mathbb{Z}}$ such that
\[
X_{n+1}=F\left(  X_{n}\right)  ,\quad n \in \mathbb{Z}.
\]

\begin{proposition}
Let $F: \mathbb{R}^2 \to \mathbb{R}^2$ be a $\Phi$-equivariant diffeomorphism. Then:
\begin{enumerate}
\item If the set $S$ is strongly $F$-invariant, then  the set $\Phi\left(  S\right)  $ is also strongly $F$-invariant.  
\item If $O_{X_{0}}$ is an orbit of $F$ with initial condition $X_{0}$, then
\[
\Phi\left(  O_{X_{0}}\right)  = \{ (  \Phi (  X_{n}) \}_{n \in \mathbb{Z}}
\]
is  an orbit of $F$ with initial condition $\Phi\left(  X_{0}\right)  $.
\end{enumerate}
\label{prop_invariance}
\end{proposition}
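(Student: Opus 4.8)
The plan is to obtain both claims directly from the single algebraic hypothesis $F\circ\Phi=\Phi\circ F$ together with the fact that $\Phi$ (and $F$) are bijections of $\mathbb{R}^2$; no machinery beyond elementary set-theoretic manipulation is needed. In particular, the smoothness of $F$ plays no role here — only its invertibility, which is what makes bi-infinite orbits $(X_n)_{n\in\mathbb{Z}}$ well defined.

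For part (1) I would simply compute the forward image of the \emph{set} $\Phi(S)$ under $F$:
\[
F(\Phi(S)) = (F\circ\Phi)(S) = (\Phi\circ F)(S) = \Phi(F(S)) = \Phi(S),
\]
where the second equality is the equivariance relation applied pointwise on $S$ and the last uses the strong $F$-invariance $F(S)=S$. This already gives $F(\Phi(S))=\Phi(S)$, i.e. $\Phi(S)$ is strongly $F$-invariant.

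For part (2) I would set $Y_n := \Phi(X_n)$ for $n\in\mathbb{Z}$ and verify that $(Y_n)_{n\in\mathbb{Z}}$ satisfies the orbit recursion: for every $n$,
\[
Y_{n+1} = \Phi(X_{n+1}) = \Phi(F(X_n)) = (\Phi\circ F)(X_n) = (F\circ\Phi)(X_n) = F(\Phi(X_n)) = F(Y_n),
\]
and $Y_0=\Phi(X_0)$ by construction. Since $\Phi$ is injective the points $Y_n$ are pairwise distinct exactly when the $X_n$ are, and since both $F$ and $\Phi$ are invertible the index set stays all of $\mathbb{Z}$; hence $\Phi(O_{X_0})$ is genuinely the orbit $O_{\Phi(X_0)}$, not merely a forward-invariant or degenerate set.

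I do not expect any real obstacle: this is the standard transport-of-structure statement in equivariant dynamics, and the only thing to be careful about is bookkeeping — reading $F(\Phi(S))$ as the image of a set and applying the commutation relation pointwise. For completeness I would also remark that $\Phi^{-1}\in G$, since left- and right-multiplying $F\Phi=\Phi F$ by $\Phi^{-1}$ yields $\Phi^{-1}F=F\Phi^{-1}$; this makes both conclusions reversible (so that $S$ is strongly invariant if and only if $\Phi(S)$ is), even though only one direction is needed in what follows.
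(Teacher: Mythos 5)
Your proof is correct and follows essentially the same route as the paper's: the set-image computation $F(\Phi(S))=\Phi(F(S))=\Phi(S)$ for part (1), and the commutation of $\Phi$ with the iterates of $F$ (which you verify one step at a time, while the paper states $F^{n}\circ\Phi=\Phi\circ F^{n}$ directly) for part (2). The extra remark that $\Phi^{-1}$ also commutes with $F$ is a nice observation consistent with the paper's discussion of the symmetry group, but it is not needed for the statement.
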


\begin{proof}
If $S$ is strongly invariant, that is $F(S) = S$, 
then $\Phi$-equivariance immediately implies 
\[
F\left(  \Phi\left(  S\right)  \right)  =\Phi\left(  F\left(  S\right)
\right)  =\Phi\left(  S\right) 
\]
showing invariance of $\Phi\left(  S\right)$ and proving the first statement. 

For the second statement, notice that $\Phi$-equivariance of $F$ implies 
\[
F^{n}\circ\Phi\left(  X\right)  =\Phi\circ F^{n}\left(  X\right)   \ \ \forall n \in \mathbb{Z}
\]
and therefore, if $Y_0 = \Phi(X_0)$, then 
\[
F^n(Y_0) = F^n(\Phi(X_0)) = \Phi(F^n(X_0)) \ \ \forall n \in \mathbb{Z}, \]
finishing the proof.
\end{proof}

Consider all the orbits with initial conditions in an invariant set $S$.
Proposition \ref{prop_invariance} above implies that an orbit in $S$ has an equivalent
orbit, in the sense of linear conjugation, in the invariant set $\Phi\left(  S\right)  $. 
More generally, the dynamics of each initial condition in $S$ is linearly conjugate to 
the dynamics of the corresponding initial condition in $\Phi\left(  S\right)  $. 
In other words, the flow of the
dynamical system in $S$ is linearly conjugate to the flow in $\Phi\left(  S\right)  $.

\subsubsection{Symmetry\label{Subsubsec:symmetry}}

We now construct explicitly the elements of the linear symmetry group $G$ of the diffeomorphism $F$.

\begin{proposition}
(Translations) The translation transformations $T_{lk}$ defined by%
\[%
\begin{array}
[c]{cccc}%
T_{lk}: & \mathbb{R}^{2} & \longrightarrow & \mathbb{R}^{2}\\
& \left[
\begin{array}
[c]{c}%
x\\
y
\end{array}
\right]  & \longmapsto & \left[
\begin{array}
[c]{c}%
2l\pi+x\\
2k\pi+y
\end{array}
\right]  \text{,}%
\end{array}
\]
commute with $F$.
\label{prop_translations}
\end{proposition}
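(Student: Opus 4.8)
The plan is to verify the commutation identity $F\circ T_{lk}=T_{lk}\circ F$ directly, by evaluating both compositions at an arbitrary point and invoking nothing beyond the $2\pi$-periodicity of the sine.

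First I would compute $F\circ T_{lk}(x,y)$. Applying $T_{lk}$ replaces the argument by $(x+2l\pi,\,y+2k\pi)$, and then applying $F$ --- written in the perturbation-of-the-identity form $F=\mathrm{Id}+a(\varphi(x,y),\varphi(y,x))$ with $\varphi$ as in \eqref{eq_varphi} --- I would use $\sin(x+2l\pi)=\sin x$ and $\sin(y+2k\pi)=\sin y$, equivalently the $2\pi$-periodicity of $\varphi$ in each variable, to collapse every nonlinear term back to its value at $(x,y)$. This gives
\[
F\circ T_{lk}(x,y)=\bigl(x+2l\pi+a\varphi(x,y),\ y+2k\pi+a\varphi(y,x)\bigr).
\]
Next I would compute $T_{lk}\circ F(x,y)$, which merely adds $(2l\pi,2k\pi)$ to $F(x,y)=(x+a\varphi(x,y),\,y+a\varphi(y,x))$, producing exactly the same expression. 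Comparing the two establishes the identity for all $l,k\in\mathbb{Z}$ and all $(x,y)\in\mathbb{R}^2$.

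There is no real obstacle here: the sole mechanism is that the nonlinear part of $F$ depends on $x,y$ only through sines of period $2\pi$, so it is insensitive to translations by the lattice $2\pi\mathbb{Z}\times 2\pi\mathbb{Z}$, while the linear (identity) part trivially commutes with translations. The one caveat worth recording is that each $T_{lk}$ is affine, not linear, hence does not literally lie in $GL(\mathbb{R}^2)$; nevertheless the equivariance statements of Proposition \ref{prop_invariance} hold verbatim when $\Phi$ is any affine bijection commuting with $F$, and it is in this extended sense that the translations are counted among the symmetries of $F$ exploited in the sequel.
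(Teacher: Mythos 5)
Your proof is correct and follows essentially the same route as the paper's: a direct evaluation of $F\circ T_{lk}$ using the $2\pi$-periodicity of the sine terms, followed by recognizing the result as $T_{lk}\circ F$. Your side remark that the $T_{lk}$ are affine rather than elements of $GL(\mathbb{R}^2)$, so that Proposition \ref{prop_invariance} must be read in the extended affine sense, is a fair observation but does not alter the argument.
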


\begin{proof}
Recalling the notation for $f$ introduced in \eqref{eq_f}, the proof is a simple computation:
\begin{align*}
F\left(  T_{lk}\left(  \left[
\begin{array}
[c]{c}%
x\\
y
\end{array}
\right]  \right)  \right)   &  =F\left(  \left[
\begin{array}
[c]{c}%
2l\pi+x\\
2k\pi+y
\end{array}
\right]  \right) \\
&  =\left[
\begin{array}
[c]{c}%
2l\pi+x+2a\sin x+a\sin y\\
2k\pi+y+a\sin x+2a\sin y
\end{array}
\right] \\
&  =\left[
\begin{array}
[c]{c}%
2l\pi\\
2k\pi
\end{array}
\right]  +\left[
\begin{array}
[c]{c}%
f\left(  x,y\right) \\
f\left(  y,x\right)
\end{array}
\right] \\
&  =T_{lk}\left(  F\left(  \left[
\begin{array}
[c]{c}%
x\\
y
\end{array}
\right]  \right)  \right)  \text{.}%
\end{align*}
\end{proof}

\noindent In view of the translation symmetry of $F$, it is enough to compute the dynamics in a fundamental domain where the dynamics is invariant to obtain the dynamics in the whole of $\mathbb{R}^{2}$. 

\begin{proposition}
(Rotation by $\pi$) The transformation $\Sigma$ defined by%
\[%
\begin{array}
[c]{cccc}%
\Sigma: & \mathbb{R}^{2} & \longrightarrow & \mathbb{R}^{2}\\
& \left[
\begin{array}
[c]{c}%
x\\
y
\end{array}
\right]  & \longmapsto & \left[
\begin{array}
[c]{c}%
-x\\
-y
\end{array}
\right]  %
\end{array}
\]
commutes with $F$.
\end{proposition}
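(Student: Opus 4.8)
The plan is to verify the commutation relation $F\circ\Sigma = \Sigma\circ F$ by direct substitution, exploiting the fact that the only nonlinear ingredient in $F$ is the sine function, which is odd. Concretely, I would write $F$ in its perturbation-of-the-identity form, $F(x,y) = (x,y) + a\,(\varphi(x,y),\varphi(y,x))$ with $\varphi(x,y) = 2\sin x + \sin y$ as in \eqref{eq_varphi}, and observe that both summands behave well under the antipodal map $\Sigma$.

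The first step is to note that the linear (identity) part of $F$ trivially commutes with the linear map $\Sigma$, since any linear map commutes with a scalar multiple of the identity. The second step is to check that the perturbation term is $\Sigma$-equivariant, i.e.\ that $\varphi(-x,-y) = -\varphi(x,y)$; this is immediate from the oddness of sine, $\sin(-t) = -\sin t$, applied termwise to $2\sin x + \sin y$, and likewise for $\varphi(y,x)$. Combining these two observations gives
\[
F(\Sigma(x,y)) = F(-x,-y) = (-x,-y) + a\,\bigl(\varphi(-x,-y),\varphi(-y,-x)\bigr) = -(x,y) - a\,\bigl(\varphi(x,y),\varphi(y,x)\bigr) = \Sigma(F(x,y)),
\]
which is the desired identity.

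Alternatively, and perhaps more transparently for the reader, I would simply expand both sides componentwise using \eqref{Eq3}: on one hand $F(\Sigma(x,y))$ has first component $-x + 2a\sin(-x) + a\sin(-y) = -(x + 2a\sin x + a\sin y)$, and on the other hand $\Sigma(F(x,y))$ has first component $-(x + 2a\sin x + a\sin y)$; the second components match by the same computation with the roles of the two coordinates interchanged. Since this holds for every $(x,y)\in\mathbb{R}^2$, the maps are equal. I do not anticipate any genuine obstacle here: the statement is a one-line consequence of the oddness of sine, and the only thing worth flagging is that, together with Proposition \ref{prop_translations}, $\Sigma$ enlarges the symmetry group $G$ and will later be used (via Proposition \ref{prop_invariance}) to generate further invariant sets around $P_3 = (\pi,\pi)$ by composing $\Sigma$ with the translations $T_{lk}$.
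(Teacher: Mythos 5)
Your proposal is correct and follows essentially the same route as the paper: a direct componentwise verification that $F(-x,-y) = -F(x,y)$, relying on the oddness of the sine function. The paper's proof is exactly your second ("alternatively") computation, written in terms of $f$ from \eqref{eq_f}.
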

\begin{proof}
The proof is again a simple verification:%
\begin{align*}
F\left(  \Sigma\left(  \left[
\begin{array}
[c]{c}%
x\\
y
\end{array}
\right]  \right)  \right)   &  =F\left(  \left[
\begin{array}
[c]{c}%
-x\\
-y
\end{array}
\right]  \right) \\
&  =\left[
\begin{array}
[c]{c}%
-x-2a\sin x-a\sin y\\
-y-a\sin x-2a\sin y
\end{array}
\right] \\
&  =-\left[
\begin{array}
[c]{c}%
f\left(  x,y\right) \\
f\left(  y,x\right)
\end{array}
\right] \\
&  =\Sigma\left(  F\left(  \left[
\begin{array}
[c]{c}%
x\\
y
\end{array}
\right]  \right)  \right)  \text{.}%
\end{align*}
\end{proof}
Finally, there are two more linear transformations that commute with $F$.

\begin{proposition}
(Reflection over the lines $y=x$ and $y=-x$) The transformations $\Psi^{\pm
}\left(  x,y\right)  $, such that%
\[%
\begin{array}
[c]{cccc}%
\Psi^{\pm}: & \mathbb{R}^{2} & \longrightarrow & \mathbb{R}^{2}\\
& \left[
\begin{array}
[c]{c}%
x\\
y
\end{array}
\right]  & \longmapsto & \left[
\begin{array}
[c]{c}%
\pm y\\
\pm x
\end{array}
\right]  %
\end{array}
\]
commute with $F$.
\end{proposition}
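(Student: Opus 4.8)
The plan is to verify directly that $F \circ \Psi^{\pm} = \Psi^{\pm} \circ F$ by elementary computation, exactly in the style of the preceding two propositions, exploiting the structural form $F(x,y) = (f(x,y), f(y,x))$ with $f(x,y) = x + 2a\sin x + a\sin y$. The key observation is that the two components of $F$ are obtained from a single function $f$ by swapping arguments, so a reflection that swaps coordinates should interact with $F$ in a transparent way.

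First I would treat $\Psi^{+}$, the reflection over $y = x$, given by $\Psi^{+}(x,y) = (y,x)$. Computing the left-hand side,
\[
F\left(\Psi^{+}(x,y)\right) = F(y,x) = \begin{bmatrix} f(y,x) \\ f(x,y) \end{bmatrix},
\]
while the right-hand side is
\[
\Psi^{+}\left(F(x,y)\right) = \Psi^{+}\begin{bmatrix} f(x,y) \\ f(y,x) \end{bmatrix} = \begin{bmatrix} f(y,x) \\ f(x,y) \end{bmatrix},
\]
so the two agree. Then I would treat $\Psi^{-}$, the reflection over $y = -x$, given by $\Psi^{-}(x,y) = (-y,-x)$. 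Here I would use the elementary identity $f(-x,-y) = -f(x,y)$, which holds because $\sin$ is odd; this is essentially the same computation that made $\Sigma$ equivariant. Then
\[
F\left(\Psi^{-}(x,y)\right) = F(-y,-x) = \begin{bmatrix} f(-y,-x) \\ f(-x,-y) \end{bmatrix} = \begin{bmatrix} -f(y,x) \\ -f(x,y) \end{bmatrix},
\]
whereas
\[
\Psi^{-}\left(F(x,y)\right) = \Psi^{-}\begin{bmatrix} f(x,y) \\ f(y,x) \end{bmatrix} = \begin{bmatrix} -f(y,x) \\ -f(x,y) \end{bmatrix},
\]
and again the two sides coincide.

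There is no real obstacle here: the statement is a routine verification and the only thing to be careful about is keeping the argument-swapping and sign bookkeeping straight, together with noting that $\Psi^{+}$ and $\Psi^{-}$ are indeed in $GL(\mathbb{R}^2)$ (each has determinant $-1$). If desired, one could also remark that $\Psi^{-} = \Sigma \circ \Psi^{+} = \Psi^{+} \circ \Sigma$, so that once $\Psi^{+}$ and $\Sigma$ are known to commute with $F$, the equivariance of $\Psi^{-}$ follows automatically from the fact that the set of such transformations is closed under composition; this gives a slightly slicker alternative to the direct computation. Either way the proof is short.

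\begin{proof}
Recall from \eqref{eq_f} that $f(x,y) = x + 2a\sin x + a\sin y$, so that $F(x,y) = (f(x,y), f(y,x))$, and note that since $\sin$ is odd we have $f(-x,-y) = -f(x,y)$. For $\Psi^{+}$ we compute
\[
F\left(\Psi^{+}\begin{bmatrix} x \\ y \end{bmatrix}\right) = F\begin{bmatrix} y \\ x \end{bmatrix} = \begin{bmatrix} f(y,x) \\ f(x,y) \end{bmatrix} = \Psi^{+}\begin{bmatrix} f(x,y) \\ f(y,x) \end{bmatrix} = \Psi^{+}\left(F\begin{bmatrix} x \\ y \end{bmatrix}\right).
\]
For $\Psi^{-}$ we compute
\[
F\left(\Psi^{-}\begin{bmatrix} x \\ y \end{bmatrix}\right) = F\begin{bmatrix} -y \\ -x \end{bmatrix} = \begin{bmatrix} f(-y,-x) \\ f(-x,-y) \end{bmatrix} = \begin{bmatrix} -f(y,x) \\ -f(x,y) \end{bmatrix} = \Psi^{-}\begin{bmatrix} f(x,y) \\ f(y,x) \end{bmatrix} = \Psi^{-}\left(F\begin{bmatrix} x \\ y \end{bmatrix}\right).
\]
Since both $\Psi^{+}$ and $\Psi^{-}$ belong to $GL(\mathbb{R}^2)$ (each has determinant $-1$), this proves that they commute with $F$.
\end{proof}
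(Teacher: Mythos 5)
Your proof is correct and follows essentially the same route as the paper: a direct componentwise verification that $F\circ\Psi^{\pm}=\Psi^{\pm}\circ F$, which the paper carries out in one pass using the $\pm$ signs where you split into the two cases and additionally note the oddness identity $f(-x,-y)=-f(x,y)$. The remark that $\Psi^{-}=\Sigma\circ\Psi^{+}$ would give the second case for free is a nice (optional) shortcut, but the substance matches the paper's argument.
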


\begin{proof}
Once more the proof is a simple verification:%
\begin{align*}
F\left(  \Psi^{\pm}\left(  \left[
\begin{array}
[c]{c}%
x\\
y
\end{array}
\right]  \right)  \right)   &  =F\left(  \left[
\begin{array}
[c]{c}%
\pm y\\
\pm x
\end{array}
\right]  \right) \\
&  =\left[
\begin{array}
[c]{c}%
\pm y\pm2a\sin y\pm a\sin x\\
\pm x\pm a\sin y\pm2a\sin x
\end{array}
\right] \\
&  =\left[
\begin{array}
[c]{c}%
\pm f\left(  y,x\right) \\
\pm f\left(  x,y\right)
\end{array}
\right] \\
&  =\Psi^{\pm}\left(  F\left(  \left[
\begin{array}
[c]{c}%
x\\
y
\end{array}
\right]  \right)  \right).
\end{align*}
\end{proof}
\begin{remark}
These four geometrical elements in the previous propositions are of course the generators of the symmetry group $G$ of $F$. For the purposes of this paper we will not need the general tools from group theory, so we abstain from further developing the characterization of $G$.
\end{remark}

\subsubsection{Simplification of the dynamics}

In view of the symmetry considerations, the four squares adjacent to the hyperbolic attractor $\left(  \pi,\pi\right)  $, labeled 
$S_{10}$, $S_{21}$, $S_{11}$, and $S_{20}$ as
shown in Figure \ref{fig: 04},  have equivalent dynamics. 
\begin{figure}[tbh]
\centering
\includegraphics[height=3.5568in,width=3.6403in]{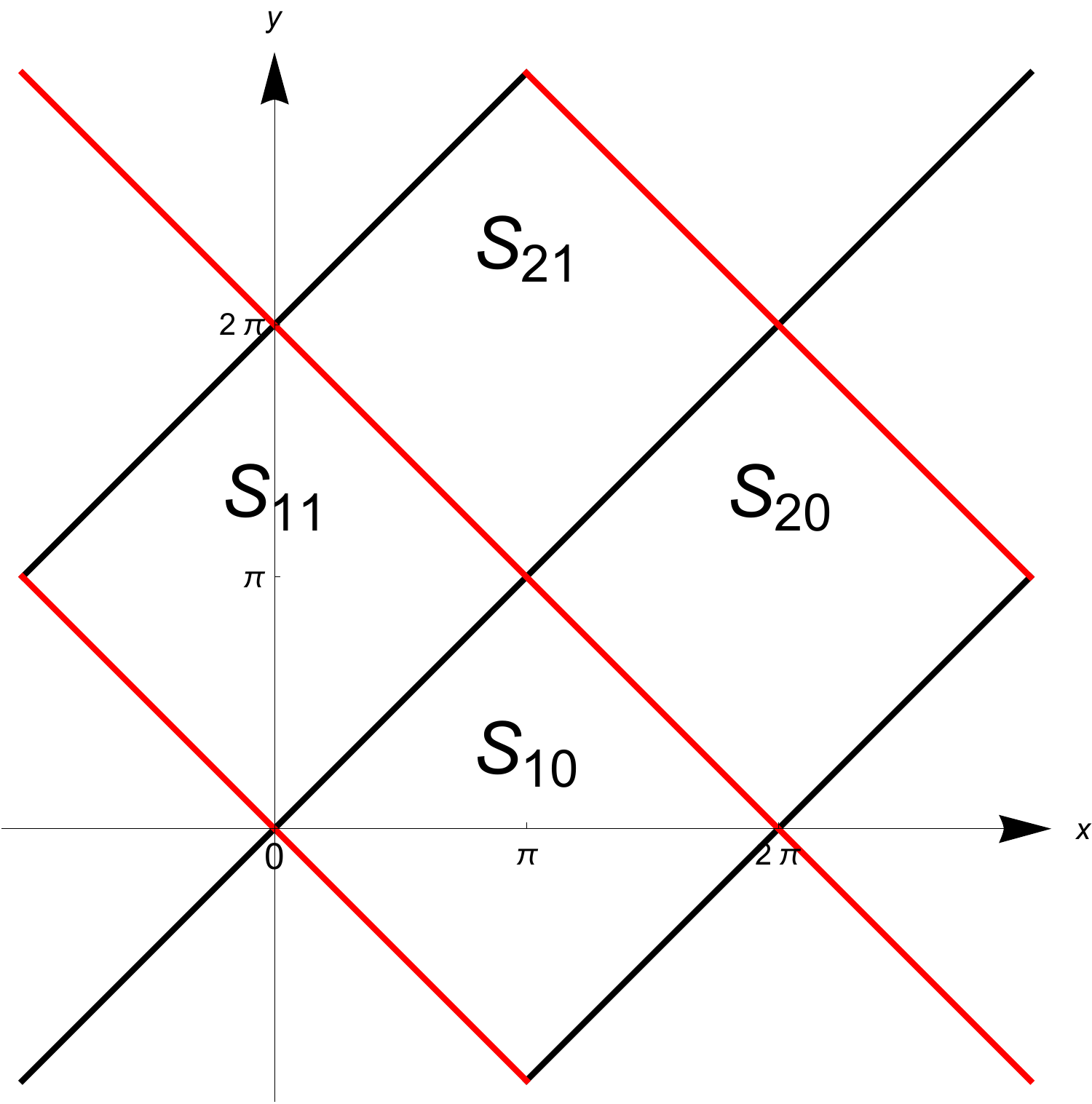}\caption{The
four squares.}%
\label{fig: 04}%
\end{figure}

We focus on the square $S_{10}$ with vertices $\left(  0,0\right)  $, $\left(
\pi,\pi\right)  $, $\left(  2\pi,0\right)  $, and $\left(  \pi,-\pi\right)  $,
defined by
\begin{equation}
S_{10}=\left\{  \left(  x,y\right)  :-x\leq y\leq2\pi-x\wedge x-2\pi\leq y\leq
x\right\} 
\label{S10}
\end{equation}
as depicted in Figure \ref{fig: 03}.

\begin{figure}[tbh]
\centering
\includegraphics[height=3.557in,width=3.6452in]{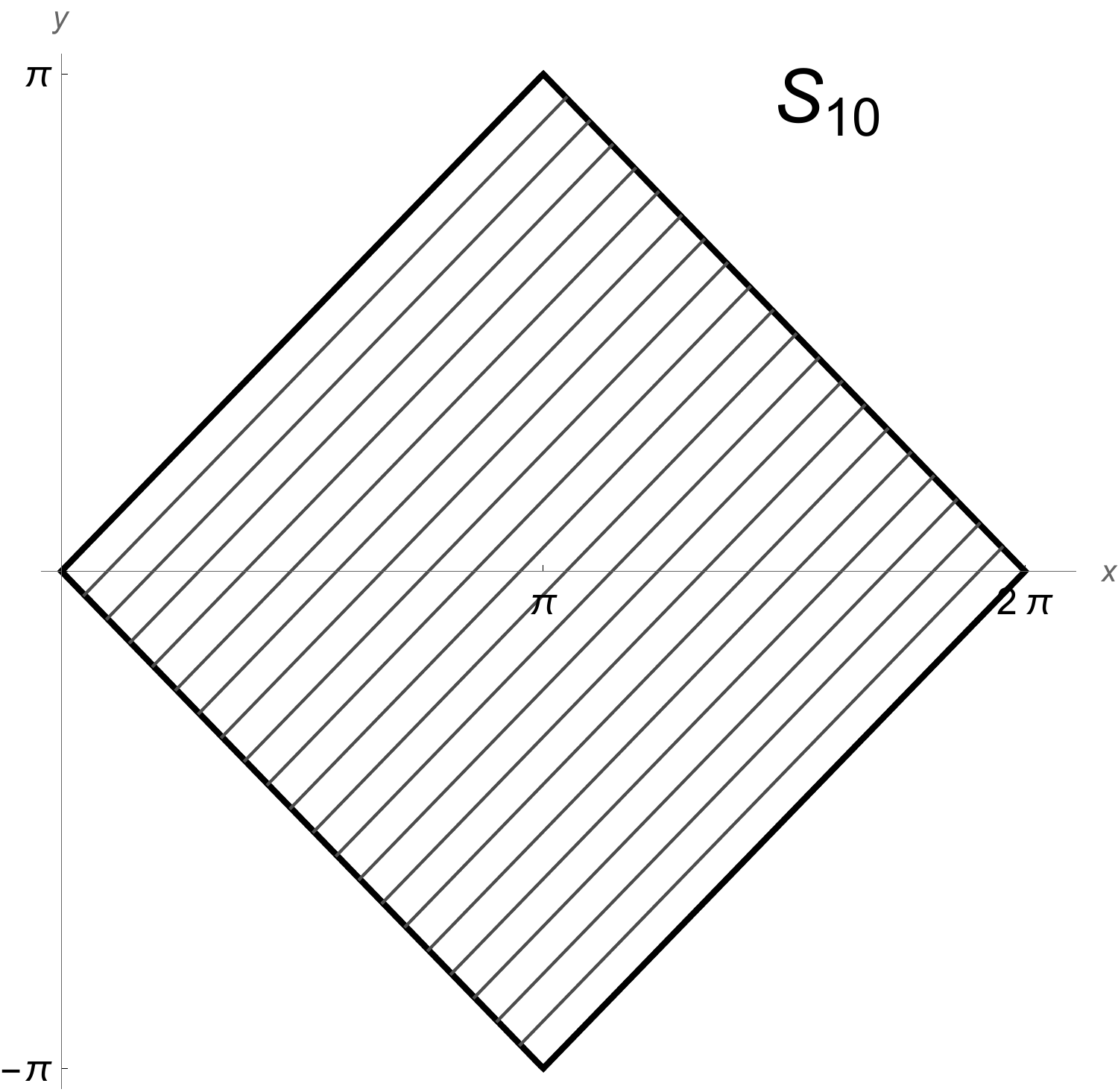}\caption{The
square $S_{10}$.}%
\label{fig: 03}%
\end{figure}


The vertically shifted square relative to $S_{10}$ is the square $S_{21}$,
defined as
\begin{equation}
S_{21}=T_{01}S_{10}, \label{eq: sym1}%
\end{equation}
where
\[
T_{01}\left[
\begin{array}
[c]{c}%
x\\
y
\end{array}
\right]  = \left[
\begin{array}
[c]{c}%
x\\
2\pi+y
\end{array}
\right]  ,
\]
with vertices $\left(  0,2\pi\right)  $, $\left(  \pi,3\pi\right)  $, $\left(
2\pi,2\pi\right)  $, and $\left(  \pi,\pi\right)  $.

The square $S_{11}$ with vertices $\left(  0,0\right)  $, $\left(  \pi
,\pi\right)  $, $\left(  0,2\pi\right)  $, and $\left(  -\pi,\pi\right)  $, is
related to $S_{10}$ by the relationship
\begin{equation}
S_{11}=\Psi^{+}S_{10}. \label{eq: sym2}%
\end{equation}

The square $S_{20}$ with vertices $\left(  2\pi,0\right)  $, $\left(  3\pi
,\pi\right)  $, $\left(  2\pi,2\pi\right)  $, and $\left(  \pi,\pi\right)  $,
is given by
\begin{equation}
S_{20}=T_{10}S_{11}=T_{10}\Psi^{+}S_{10}. \label{eq: sym3}%
\end{equation}

These relations coupled with equivariance of the map $F$ imply that the global dynamics
in $\mathbb{R}^2$ is completely determined by the dynamics in $S_{10}$. It therefore suffices to analyze the dynamics in $S_{10}$ to understand the global synchronization dynamics. Moreover, as the next Proposition shows, there are also internal symmetries
within $S_{10}$.

\begin{proposition}
\label{Prop: S1Rot}(Rotation by $\pi$ in $S_{10}$) The following identity holds:

\begin{equation}
S_{10}=T_{10}\Sigma S_{10}. \label{eq: sym4}%
\end{equation}
\label{prop_internal_symmetry}
\end{proposition}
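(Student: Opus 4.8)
The plan is to exhibit the affine map $T_{10}\circ\Sigma$ explicitly and verify that it maps the square $S_{10}$ onto itself. Since $\Sigma(x,y)=(-x,-y)$ and $T_{10}(x,y)=(x+2\pi,y)$, composing gives
\[
T_{10}\Sigma(x,y) = (2\pi - x,\, -y),
\]
which is the rotation by $\pi$ about the point $(\pi,0)$; in particular it is an affine involution, hence a bijection of the plane equal to its own inverse.

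The first route I would take is a direct check against the defining inequalities \eqref{S10}. Writing $(x',y')=T_{10}\Sigma(x,y)=(2\pi-x,-y)$, the condition $-x'\le y'\le 2\pi-x'$ reads $x-2\pi\le -y\le x$, that is, $-x\le y\le 2\pi-x$, which is exactly the first pair of inequalities defining $S_{10}$; likewise $x'-2\pi\le y'\le x'$ reads $-x\le -y\le 2\pi-x$, that is, $x-2\pi\le y\le x$, the second pair. Hence $(x,y)\in S_{10}$ if and only if $T_{10}\Sigma(x,y)\in S_{10}$. Because $T_{10}\Sigma$ is a bijection (indeed an involution), this equivalence yields both inclusions $T_{10}\Sigma(S_{10})\subseteq S_{10}$ and $S_{10}\subseteq T_{10}\Sigma(S_{10})$, so $T_{10}\Sigma S_{10}=S_{10}$, which is \eqref{eq: sym4}.

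An alternative, perhaps more geometric, route I would mention is to observe that $S_{10}$ is the convex hull of its four vertices $(0,0)$, $(\pi,\pi)$, $(2\pi,0)$, $(\pi,-\pi)$, and that the affine map $T_{10}\Sigma$ merely permutes them, interchanging $(0,0)\leftrightarrow(2\pi,0)$ and $(\pi,\pi)\leftrightarrow(\pi,-\pi)$. An affine map carries a convex hull to the convex hull of the images of the generating points, so $T_{10}\Sigma$ sends $S_{10}$ onto the convex hull of the same four vertices, namely $S_{10}$ itself.

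There is no genuine obstacle here: the statement is a routine verification, and the only subtlety is the passage from the pointwise equivalence to the set equality, which is handled by invoking that $T_{10}\Sigma$ is a bijection. It is worth recording for later use that $T_{10}\Sigma$, being a composition of the symmetries $T_{10}$ and $\Sigma$ established in the preceding propositions, also commutes with $F$; thus Proposition \ref{prop_invariance} applies and the dynamics of $F$ restricted to $S_{10}$ is itself symmetric under the half-turn about $(\pi,0)$, a fact that will further reduce the region to be analyzed.
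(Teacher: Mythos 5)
Your proof is correct, but it proceeds by a genuinely different computation from the paper's. You verify the set identity as literally stated: you write out $T_{10}\Sigma(x,y)=(2\pi-x,-y)$, observe it is the half-turn about $(\pi,0)$ and an involution, and check that it preserves the defining inequalities \eqref{S10} (or, equivalently, permutes the four vertices of the convex quadrilateral $S_{10}$), which gives $T_{10}\Sigma S_{10}=S_{10}$. The paper's proof, by contrast, consists entirely of the verification that $F\circ T_{10}\Sigma=T_{10}\Sigma\circ F$, i.e.\ that the half-turn about $(\pi,0)$ commutes with $F$; the purely geometric fact that this half-turn maps the square onto itself is left implicit in the surrounding discussion. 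The two computations address complementary halves of the proposition's real content (the geometric invariance of $S_{10}$ under the half-turn, and the equivariance of $F$ under it), and both are needed for the way the result is used later. Your closing remark closes the gap on the dynamical side: since $T_{10}$ and $\Sigma$ each commute with $F$ by the preceding propositions and the commutant is closed under composition, $T_{10}\Sigma$ commutes with $F$ without any further calculation, so Proposition \ref{prop_invariance} applies and the dynamics on $S_{10}$ inherits the half-turn symmetry. In that sense your version proves the statement as written and derives the equivariance for free, whereas the paper computes the equivariance directly and takes the set identity as evident; taken together with your final observation, nothing is missing from your argument.
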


\begin{proof}
\begin{align*}
F\left(  T_{10}\Sigma\left(  \left[ \begin{array}{c} x \\ y \end{array} \right]  \right)  \right)   &  =F\left(  \left[ \begin{array}{c} 2\pi-x \\ -y \end{array} \right]  \right) \\
&  =\left[ \begin{array}{c} 2\pi-x-2a\sin x-a\sin y \\ -y-a\sin x-2a\sin y \end{array} \right] \\
&  =\left[ \begin{array}{c} 2\pi \\ 0 \end{array} \right]  - \left[ \begin{array}{c} f\left(  x,y\right) \\ f\left(  y,x\right) \end{array} \right] \\
&  =T_{10}\Sigma\left(  F\left(  \left[ \begin{array}{c} x \\ y \end{array} \right]  \right)  \right) .
\end{align*}
\end{proof}

Proposition \ref{prop_internal_symmetry} thus indicates an internal symmetry within $S_{10}$. In fact, the composition of $T_{10}$ after  the rotation of $\pi$ is a rotation of $\pi$ around the centroid of the square $S_{10}$, implying a rotational symmetry around $\left(  \pi,0\right)  $.

We have just constructed the symmetries induced in the dynamics of $F$ by the equivariance. These show that  we need only study the map $F$ in a
bounded region, the invariant square $S_{10}$, to obtain the dynamical behavior in the full phase space $\mathbb{R}^2$. 
Equipped with all the invariance and symmetry properties established in this Section,
we now focus on the dynamics inside the invariant square $S_{10}$.


\section{The study of the dynamics restricted to $S_{10}$}

We now concentrate on the analysis of the dynamics on the invariant square $S_{10}$. We aim to prove the existence of heteroclinic curves connecting the repellers $\left(  0,0\right)
$ and $\left(  2\pi,0\right)  $, the leftmost and the rightmost vertices of
$S_{10}$, with the saddle $\left(  \pi,0\right)  $ in the centroid of $S_{10}$.
The heteroclinics, whose existence is established in Theorem \ref{Thm: Main},
 will divide $S_{10}$ in two disjoint and invariant sets
$S_{10}^{+}$ and $S_{10}^{-}$, as depicted in Figure
\ref{fig: 05}, where we obtain the heteroclinics by a numerical procedure.
Once the existence of such heteroclinics is established the dynamics of $F$ in
the invariant rectangle $S_{10}$ is fully determined, and the symmetry properties 
established in Section \ref{sec_invariance_symmetry} extend this to the full phase space $\mathbb{R}^2$.

\begin{figure}[tbh]
\centering
\includegraphics[height=4.4381in,width=4.5484in]{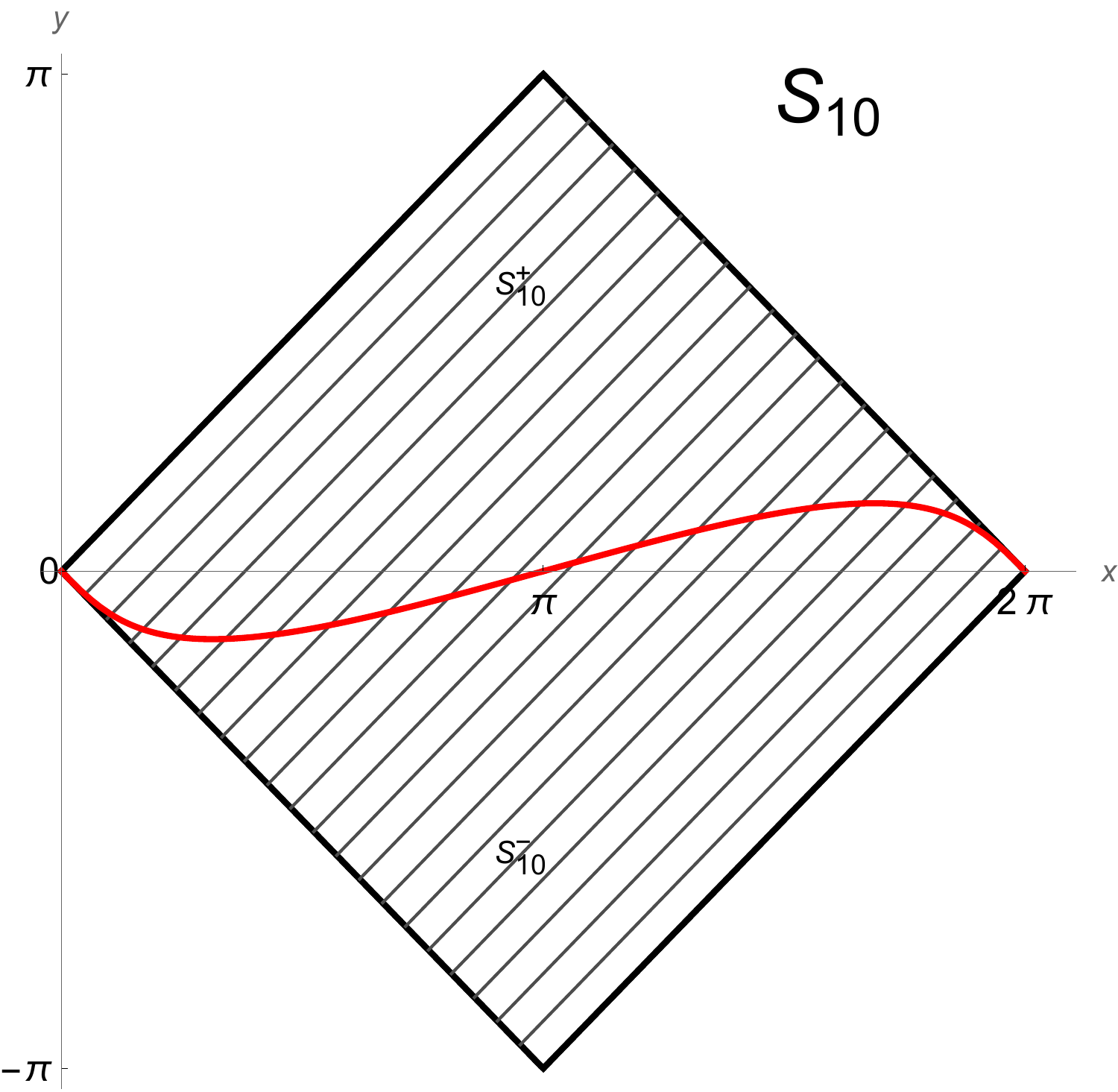}\caption{Heteroclinic 
curves connecting $\left(  0,0\right)  $ with $\left(  \pi,0\right)  $ and
$\left(  2\pi,0\right)  $ with $\left(  \pi,0\right)  $.
These heteroclinics are precisely the stable manifold of the saddle $\left(  \pi,0\right)  $. The heteroclinics above were obtained numerically with parameter $a=0.1$.}%
\label{fig: 05}%
\end{figure}

\begin{theorem}
\label{Thm: Main}There exists a heteroclinic curve connecting the unstable node in
$\left(  0,0\right)  $ with the saddle point at $\left(  \pi,0\right)  $.
\end{theorem}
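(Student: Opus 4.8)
The plan is to establish the heteroclinic by a shooting/monotonicity argument along the lower boundary of $S_{10}$, exploiting the one-dimensional dynamics there. First, I would observe that the segment $I = \{(x,0): 0 \le x \le 2\pi\}$ — part of the boundary line $\Gamma_0^-$ — is invariant by Lemma \ref{lemma_1} (part (2), with $l=0$), so on it $F$ restricts to the scalar map $g(x) = x + 3a\sin x$ (since $\varphi(x,0) = 2\sin x$, and one checks the second coordinate stays zero). The fixed points of $g$ in $[0,2\pi]$ are $0$, $\pi$, $2\pi$, with $g'(0) = 1+3a > 1$, $g'(\pi) = 1-3a \in (0,1)$, $g'(2\pi) = 1+3a>1$. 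Since $g(x) > x$ on $(0,\pi)$ and $g$ is an increasing diffeomorphism of $[0,\pi]$ for $a < \tfrac16$, every point of $(0,\pi)$ has a full orbit $\{g^n(x)\}_{n\in\mathbb Z} \subset (0,\pi)$ converging to $\pi$ as $n\to+\infty$ and to $0$ as $n\to-\infty$. Hence $(0,\pi)\times\{0\}$ is itself a heteroclinic orbit connecting the repeller $(0,0)$ to the saddle $(\pi,0)$ inside $\partial S_{10}$.

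The substance of the theorem, however, is that this heteroclinic is precisely the (branch of the) stable manifold of the saddle $(\pi,0)$ — equivalently, that the branch of $W^s(\pi,0)$ emanating in the eigendirection $v_s(\pi,0)=(2+\sqrt3,1)$ actually coincides with (or is asymptotic to) this segment, and that the orbit on $I$ is normally hyperbolic so that $W^s$ is genuinely a curve. So the key steps I would carry out are: (i) linearize $F$ at $(\pi,0)$ and note the stable eigenvalue $1-a\sqrt3 \in (0,1)$ with eigendirection $v_s$ pointing into $\overline{S_{10}}$, so the stable manifold theorem gives a local analytic stable curve $W^s_{loc}$ through $(\pi,0)$; (ii) show this local stable curve is forced to lie along, or to the correct side of, the invariant segment $I$ — here I would use that $I$ is invariant and that the other eigenvalue $1+a\sqrt3 > 1$ is unstable, so $W^s$ is one-dimensional and $I$ near $\pi$ must be contained in it (the invariant segment through a saddle, contracted under forward iteration, is the stable manifold); (iii) globalize by pulling back: $W^s(\pi,0) = \bigcup_{n\ge 0} F^{-n}(W^s_{loc})$, and since $F^{-1}$ preserves $I$ and maps $[\pi-\delta,\pi]\cap I$ onto a larger sub-segment exhausting $(0,\pi]$, the global stable manifold restricted to $\overline{S_{10}}$ is exactly $\{(x,0):0 < x \le \pi\}$, with the repeller $(0,0)$ as its $\alpha$-limit. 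This yields the heteroclinic and identifies it with the stable manifold.

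The step I expect to be the main obstacle is (ii)–(iii): rigorously identifying the abstract stable manifold with the explicit invariant segment, and controlling it globally up to $(0,0)$. The local stable manifold theorem gives a curve tangent to $v_s$ but a priori only defined near $(\pi,0)$; one must argue it cannot leave $I$. The clean way is to invoke invariance of $I$ together with uniqueness: since $\dim W^s = 1$ and $I$ is an invariant $C^\omega$ curve through $(\pi,0)$ on which $F$ contracts toward $\pi$ (because $0 < g'(\pi) = 1-3a < 1$), $I$ must coincide with $W^s_{loc}$ on a neighborhood of $(\pi,0)$; then forward-invariance of $W^s$ under $F^{-1}$ and the fact that $F^{-1}|_I$ expands away from $\pi$ and fixes the endpoints $0,2\pi$ force $W^s(\pi,0)\cap\overline{S_{10}} \supseteq \{(x,0):0<x\le\pi\}$, and a dimension/transversality count (the branch toward $(2\pi,0)$ is a different piece) shows equality. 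A secondary subtlety is checking that the stable manifold stays inside $S_{10}$ rather than escaping through a lateral edge — but this is immediate, since $W^s$ lies on the invariant boundary line $\Gamma_0^-$, which bounds $S_{10}$, and by the Proposition the orbits in $\mathrm{int}(S_{10})$ cannot cross $\partial S_{10}$, so by continuity $W^s_{loc}$ cannot either.
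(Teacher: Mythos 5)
Your proposal rests on a false premise: the horizontal segment $I=\{(x,0):0\le x\le 2\pi\}$ is \emph{not} invariant under $F$. Computing directly, $F(x,0)=\left(x+2a\sin x,\; a\sin x\right)$, so the second coordinate of the image is $a\sin x\neq 0$ for $x\in(0,\pi)$: points of $I$ immediately leave the $x$-axis. The segment $I$ is also not part of $\Gamma_0^-$, which is the line $y=-x$, and it is not part of $\partial S_{10}$ either --- it is a diagonal of the small square $s_1$ and lies in the \emph{interior} of $S_{10}$. Relatedly, the scalar map $x\mapsto x+3a\sin x$ that you write down is the restriction of $F$ to the invariant lines $y=x+2k\pi$ (this is exactly the computation in Lemma \ref{lemma_1}), not to $y=0$. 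A further internal inconsistency: you correctly quote the stable eigendirection $v_s(\pi,0)=(2+\sqrt3,1)$, but this vector has slope $2-\sqrt3\approx 0.27$, so the local stable manifold at the saddle is \emph{not} tangent to the horizontal axis and cannot coincide with $I$ near $(\pi,0)$. The heteroclinic is a genuinely curved object (see Figure \ref{fig: 05}); steps (ii)--(iii) of your plan, which identify $W^s(\pi,0)$ with $I$, therefore cannot be repaired.

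The argument the paper actually uses avoids any explicit formula for the connection. It works inside the square $s_1$ (vertices $(0,0)$, $(\pi/2,\pi/2)$, $(\pi,0)$, $(\pi/2,-\pi/2)$), whose left edges $l_3,l_4$ lie on the invariant lines $y=\pm x$ while the right edges map into $s_2$ and $s_3$ respectively. Since $\varphi(x,y)=2\sin x+\sin y>0$ on $\operatorname{int}(s_1)$, the first coordinate increases strictly along orbits there, so $F$ has no $\omega$-limit sets in $\operatorname{int}(s_1)$, and the backward iterates $\bigcup_{n\ge 0}F^{-n}W^s_{\mathrm{loc}}(\pi,0)$ extend monotonically to the left until they accumulate on a boundary point $P_*$ of $s_1$; that accumulation point must be a fixed point of $F$, and since the invariant edges $l_3,l_4$ contain no fixed points other than $(0,0)$, one concludes $P_*=(0,0)$. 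If you want to salvage your write-up, replace the invariant-segment claim with this monotonicity-plus-limit-point argument for the global stable manifold.
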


\begin{theorem}
\label{Thm: Main2}There exists a heteroclinic curve connecting the saddle in
$\left(  \pi,0\right)  $ to the sink in $\left(  \pi,\pi\right)  $.
\end{theorem}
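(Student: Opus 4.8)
The plan is to realise the heteroclinic as the closure of one branch of the unstable manifold $W^{u}(\pi,0)$ and to drive its forward iterates to $(\pi,\pi)$ by a Lyapunov function for $F$. Write $F=\mathrm{Id}+a\,V$ with $V(x,y)=\bigl(\varphi(x,y),\varphi(y,x)\bigr)=(2\sin x+\sin y,\ \sin x+2\sin y)$, cf. \eqref{eq_varphi}; I claim that $W(x,y)=\cos x+\cos y$ is a \emph{strict} Lyapunov function for $F$. A second–order Taylor expansion of $W$ along the displacement $a\,V(p)$, using $\nabla W(p)\cdot V(p)=-\bigl[(\sin x+\sin y)^{2}+\sin^{2}x+\sin^{2}y\bigr]$, the elementary bound $\lvert V(p)\rvert^{2}\le 9(\sin^{2}x+\sin^{2}y)$, and $\lVert\mathrm{Hess}\,W\rVert\le1$, gives
\[
W(F(p))-W(p)\ \le\ -a\Bigl(1-\tfrac{9a}{2}\Bigr)\bigl[(\sin x+\sin y)^{2}+\sin^{2}x+\sin^{2}y\bigr],
\]
which, since $a<\tfrac16$, is strictly negative unless $\sin x=\sin y=0$, i.e. unless $p\in\mathrm{Fix}(F)=\pi\mathbb Z\times\pi\mathbb Z$. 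Note that $W$ equals $2$ at the sources, $0$ at the saddles and $-2$ at the sinks.

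Next I would fix the geometry. The saddle $(\pi,0)$ is hyperbolic with a one–dimensional unstable manifold tangent to $v_{u}$ of \eqref{eq: unstableEV}; its two branches are exchanged by the rotation $T_{10}\Sigma$ of $S_{10}$ about $(\pi,0)$ (Proposition \ref{prop_internal_symmetry}), so it is enough to treat the branch $W^{u,+}$ leaving $(\pi,0)$ into $S_{10}^{+}$. By Theorem \ref{Thm: Main} and that same symmetry, the two heteroclinics from $(0,0)$ and from $(2\pi,0)$ to $(\pi,0)$ together form the stable manifold $W^{s}(\pi,0)$; together with the invariant edges of $S_{10}$ on $\Gamma_{0}^{+}$ and $\Gamma_{1}^{-}$ (Lemma \ref{lemma_1}), this curve bounds a compact, strongly $F$-invariant topological disk $\overline{S_{10}^{+}}$ — namely the component of $\overline{S_{10}}\setminus W^{s}(\pi,0)$ that contains the fixed point $(\pi,\pi)$, hence is mapped onto itself — whose only fixed points are $(0,0),(2\pi,0),(\pi,0),(\pi,\pi)$, with $W$-values $2,2,0,-2$. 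Since $\overline{S_{10}^{+}}$ is closed and invariant, the branch $W^{u,+}$ stays inside it.

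I would then conclude by a LaSalle–type argument. Fix $p\in W^{u,+}\setminus\{(\pi,0)\}$; then $p\notin\mathrm{Fix}(F)$, its forward orbit lies in the compact invariant set $\overline{S_{10}^{+}}$, and $n\mapsto W(F^{n}(p))$ is strictly decreasing, hence converges to some $W_{\infty}$. The $\omega$-limit $\omega(p)$ is nonempty, compact and $F$-invariant, and $W\equiv W_{\infty}$ on it (by the strict inequality above), so $\omega(p)$ consists of fixed points sharing the value $W_{\infty}$. Now $W_{\infty}<W(p)\le 2$, so $W_{\infty}\ne 2$ and hence neither $(0,0)$ nor $(2\pi,0)$ lies in $\omega(p)$; and $W_{\infty}\ne 0$, because $\omega(p)=\{(\pi,0)\}$ would make the orbit of $p$ homoclinic to $(\pi,0)$, forcing $W(F^{n}(p))\to 0$ both as $n\to+\infty$ and — since $p\in W^{u}(\pi,0)$ — as $n\to-\infty$, contradicting the strict monotonicity of $W$ along the orbit. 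Hence $W_{\infty}=-2$ and $\omega(p)=\{(\pi,\pi)\}$ for every $p\in W^{u,+}$; therefore $\overline{W^{u,+}}=W^{u,+}\cup\{(\pi,0),(\pi,\pi)\}$ is the heteroclinic curve connecting the saddle $(\pi,0)$ to the sink $(\pi,\pi)$.

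The main obstacle is the first step. Planar diffeomorphisms obey no Poincar\'e--Bendixson theorem, so absent a Lyapunov function one would have to exclude periodic orbits, a homoclinic loop at $(\pi,0)$ and nontrivial recurrence inside $S_{10}^{+}$ directly — for example via a nest of trapping neighbourhoods of $(\pi,\pi)$ built from the nullclines of Figure \ref{fig: 01} — which is considerably more delicate. The function $W=\cos x+\cos y$ works because it is a Lyapunov function for the underlying field $V$ and, thanks to $a<\tfrac16$, survives the Euler step $F=\mathrm{Id}+aV$; the only remaining point needing care is the verification that $\overline{S_{10}^{+}}$ is a genuine invariant disk with exactly those four fixed points, which relies on Theorem \ref{Thm: Main} and the symmetries of Section \ref{sec_invariance_symmetry}.
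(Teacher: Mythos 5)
Your proposal is correct, but it takes a genuinely different route from the paper. The paper's own proof is a short monotonicity argument localized to the small square $s_{2}$ (vertices $(\pi,0)$, $(\pi/2,\pi/2)$, $(\pi,\pi)$, $(3\pi/2,\pi/2)$): $s_{2}$ is positively invariant because the perturbation field points inward on its lower edges while its upper edges lie on the invariant lines of Lemma \ref{lemma_1}, and the second component $\varphi(y,x)$ of the perturbation is strictly positive on $\operatorname{int}(s_{2})$, so orbits drift upward and can only accumulate at the fixed point $(\pi,\pi)$ at the top vertex; this is the same mechanism as in the proof of Theorem \ref{Thm: Main}, with the roles of the coordinates interchanged. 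You instead produce a global strict Lyapunov function $W(x,y)=\cos x+\cos y$ for the map $F$ itself (not merely for the field $V$), with a clean second-order estimate showing the decrease survives the Euler step for $a<\tfrac16$, and then run a LaSalle argument on the branch of $W^{u}(\pi,0)$ confined to the compact invariant set $\overline{S_{10}^{+}}$, separating the candidate limit fixed points by their $W$-values and excluding a homoclinic loop by monotonicity of $W$ along the orbit. Your computations check out ($\nabla W\cdot V=-\bigl[(\sin x+\sin y)^{2}+\sin^{2}x+\sin^{2}y\bigr]$, $\lvert V\rvert^{2}\le 9(\sin^{2}x+\sin^{2}y)$, $\operatorname{Fix}(F)=\pi\mathbb{Z}\times\pi\mathbb{Z}$ since the system $2\sin x+\sin y=\sin x+2\sin y=0$ has only the trivial solution). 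What your approach buys is considerable: it rules out periodic orbits and nontrivial recurrence everywhere at once, makes the convergence to $(\pi,\pi)$ independent of the delicate nullcline geometry, and would in fact streamline the proof of Theorem \ref{Thm: Main} and the basin-of-attraction statements as well. What the paper's argument buys is elementarity (no Taylor estimate) and the explicit positive invariance of $s_{2}$, which is reused in the subsequent corollary. The only points where your write-up leans on unstated but standard facts are the identification of $\overline{S_{10}^{+}}$ as a compact invariant disk containing exactly the four fixed points (which you correctly attribute to Theorem \ref{Thm: Main} and the symmetries), and the passage from pointwise convergence of orbits on $W^{u,+}$ to the claim that $\overline{W^{u,+}}$ adds only the endpoint $(\pi,\pi)$, which follows from the usual fundamental-domain argument at a hyperbolic sink; neither is a gap at the level of rigor of the paper.
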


Theorem \ref{Thm: Main} is the main result in this Section, 
since it implies
that $S_{10}$ decomposes into two separately invariant regions  $S_{10}^{+}$ and $S_{10}^{-}$ bounded by the heteroclinic connections (see Figure \ref{fig: 05}).
Theorem \ref{Thm: Main2} assures that $\left(  \pi,\pi\right)$ is an attractor for
the region $s_2$ (see Figure  \ref{fig: 06}), completing the full characterization 
of the dynamics on $S_{10}$ and thus, by symmetry, to the global phase space.

We now proceed to prove Theorem \ref{Thm: Main}.

\begin{proof}
Recall, from \eqref{S10}, the definition of $S_{10}$: 
\[
S_{10}=\{(x,y)\in\mathbb{R}^{2}:-x\leq y\leq-x+2\pi\wedge x-2\pi\leq y\leq x\}.
\]
If there exists a a heteroclinic curve
$\gamma^{L}$ connecting the repeller $(0,0)$ to the saddle $(\pi,0)$, then it must
lie in the interior of $S_{10}$, since the boundary $\partial(S_{10})$ is composed
of line segments on the four invariant lines
 $\Gamma_{0}^{-}$, $\Gamma_{1}^{-}$, $\Gamma_{-1}^{+}$, and
$\Gamma_{0}^{+}$. On the other hand, the internal symmetry property 
established in Proposition \ref{Prop: S1Rot} immediately implies that, if such a 
heteroclinic 
 $\gamma^{L}$ exists, then there must also exist another heteroclinic curve 
$\gamma^{R}$ connecting the repeller $(2\pi,0)$ to
the saddle $(\pi,0)$. Finally, observe that existence of both heteroclinics  
 $\gamma^{L}$ and  $\gamma^{R}$ implies that the curve 
 $\Gamma=\gamma^{L} \cup  \gamma^{R} \cup \{(\pi,0) \}$ partitions 
$\mbox{int} (S_{10})$ into two disjoint open sets: the
upper domain $S_{10}^{+}$ above $\Gamma$ and the saddle point
 and the lower region $S_{10}^{-}$ below $\Gamma$, as depicted in Figure \ref{fig: 05}.
Observe that these open sets are separately invariant, that is 
\[F(S_{10}^{\pm
})=S_{10}^{\pm}.\]

To prove the existence of the heteroclinics and study the dynamics of $F$ 
 in the regions $S_{10}^{\pm}$, it will be convenient to partition $S_{10}$ into four smaller squares $s_{1}$, $s_{2}$, $s_{3}$, and $s_{4}$ defined by
\begin{align*}
s_{1} &  =\{(x,y)\in\mathbb{R}^{2}:x-\pi\leq y\leq x\wedge-x\leq y\leq-x+\pi\},\\
s_{2} &  =\{(x,y)\in\mathbb{R}^{2}:x-\pi\leq y\leq x\wedge-x+\pi\leq y\leq
-x+2\pi\},\\
s_{3} &  =\{(x,y)\in\mathbb{R}^{2}:x-2\pi\leq y\leq x-\pi\wedge-x\leq y\leq
-x+\pi\},\\
s_{4} &  =\{(x,y)\in\mathbb{R}^{2}:x-2\pi\leq y\leq x-\pi\wedge-x+\pi\leq
y\leq-x+2\pi\}.
\end{align*}%

\noindent These squares are formed by connecting the midpoints of the edges of the
square $S_{10}$ with its centroid at $(\pi,0)$.

\begin{figure}
[ptb]
\begin{center}
\includegraphics[
height=4.4503in,
width=4.554in
]%
{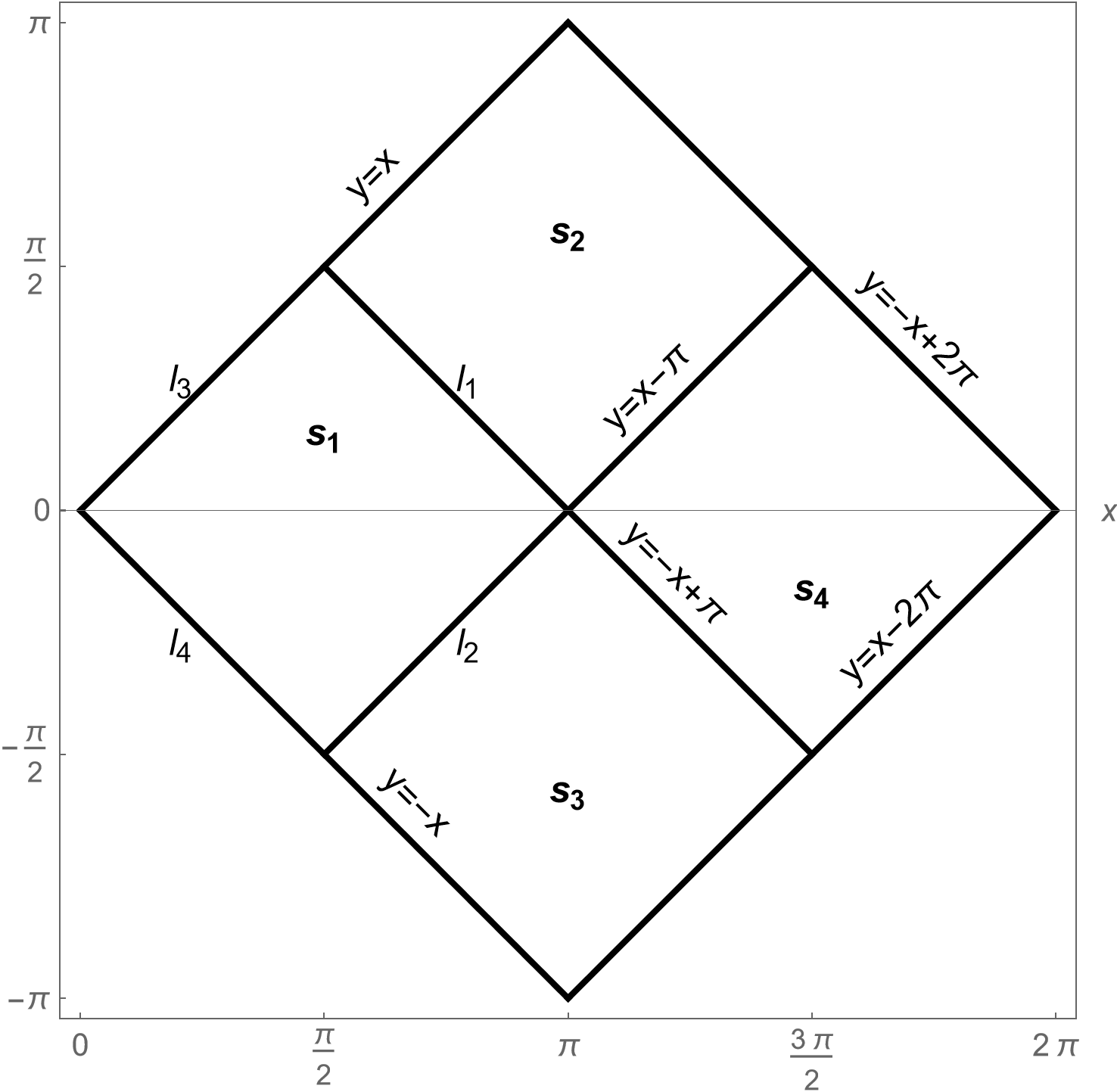}%
\caption{The four squares that cover $S_{10}$}%
\label{fig: 06}%
\end{center}
\end{figure}

Denote the top right edge of $s_{1}$ by $l_{1}$, the bottom right edge of $s_{1}$ by $l_{2}$,
the top left edge of $s_{1}$ by $l_{3}$ and the bottom right edge of $s_{1}$ by $l_{4}$,
 as depicted in Figure \ref{fig: 06}. While $l_3$ and $l_4$ lie on the invariant lines 
described by Proposition \ref{prop_translations}, it is easily seen that $l_1$ and $l_2$ are {\em not} invariant by $F$. On the edge  $l_{1}$, where $x\in\left(  \frac{\pi}{2},\pi\right)  $ and $y=\pi-x$, the mapping $F$ is given by
\[
F\left(
\begin{bmatrix}
x\\
\pi-x
\end{bmatrix}
\right)  =%
\begin{bmatrix}
x+3a\sin x\\
\pi-x+3a\sin x
\end{bmatrix}
.
\]
Since $x+3a\, \sin x>x$ and $\pi-x+3a\, \sin x>\pi-x$ for $x\in\left(  \frac{\pi
}{2},\pi\right)  $, it follows that the image $F(l_1)$ lies in the small square $s_2$, 
indicating that initial conditions on $l_1$ iterate into $s_{2}$, pointing right and 
upwards. 

A similar analysis for the lower right edge $l_{2}$, where
$x\in\left(  \frac{\pi}{2},\pi\right)  $ and $y=x-\pi$, shows that 
\[
F\left(
\begin{bmatrix}
x\\
x-\pi
\end{bmatrix}
\right)  =%
\begin{bmatrix}
x+a\sin x\\
x-\pi-a\sin x
\end{bmatrix}
,
\]
and since $x+3a\sin(x)>x$ and $x-\pi>x-\pi-a\sin(x)$ for $x\in\left(
\frac{\pi}{2},\pi\right)  $ it follows that $F(l_2)$ iterates into $s_3$. This 
means that initial conditions on $l_{2}$ also leave $s_{1}$, in this case pointing 
right and downwards.  

We now proceed to prove the existence of the heteroclinic 
$\gamma^{L}$ connecting the repeller $(0,0)$ to the saddle $(\pi,0)$. 
We focus our attention on the square $s_{1}$ since it will turn out that the
conjectured heteroclinic $\gamma^{L}$ will be contained in  $s_1$.

We first observe, from \eqref{eq_f} and \eqref{eq_varphi}, that the first component $f(x,y)$ of the diffeomorphism
$F$ satisfies 
\[
f(x,y) - x =2a\sin x+a\sin y=a \varphi(x,y).
\]
It is trivial to see that $\varphi(x,y)$ is  positive throughout $\mbox{int} (s_1)$ and the only zeros of $\varphi$ in $s_1$ are at the fixed points $(0,0)$ and $(\pi,0)$.
 Consequently, $F$ has no $\omega$-limit sets in 
$\mbox{int} (s_1)$. Since the left edges $l_3$ and $l_4$ of $s_1$, as observed above, lie on invariant lines, it follows that orbits with initial condition inside $s_1$
can only escape $s_{1}$ by crossing the interior of the right edges $l_{1}$ and $l_{2}$.



The saddle point $(\pi,0)$ is of course invariant under both $F$ and $F^{-1}$.
In a neighborhood of  $(\pi,0)$ there exists a local stable manifold $W^s_{\mbox{\scriptsize{loc}}}(\pi,0)$ 
for $F$ 
tangent to the stable 
eigenspace of $(\pi,0)$ for the
dynamics of $F$. A similar situation happens for the dynamics of $F^{-1}$ with reversed stability. 

Consider now the global stable manifold
\begin{equation}
 W^s(\pi,0) = \bigcup_{n=0}^{+\infty} F^{-n} W^s_{\mbox{\scriptsize{loc}}}(\pi,0). 
\label{eq_Ws}
\end{equation}
The intersection of this manifold with $\mbox{int}(s_1)$ is (relatively) open.

We denote by $P_* $ the boundary point $P_* = (x_*, y_*)\neq(\pi,0)$ of $W^s(\pi,0)$ in $s_1$.

Since $\varphi(x,y)$ in $\mbox{int}(s_1)$ is strictly positive, the
$xx$-projection, i.e., the projection on the direction $(1,0)$, of $W^s(\pi,0)$ lying in $\mbox{int}(s_1)$ extends  monotonically to 
the left; therefore the boundary point $P_* = (x_*, y_*)$  must lie on the boundary of $s_1$. 

The boundary of $s_1$  contains the line segments $l_3$ and $l_4$ intersecting at the point $(0,0)$. Given any point $(x_0, y_0)$ in $W^s_{loc}(\pi,0) \cap s_1$, the point $P_*$ satisfies
\[ P_* = \lim_{n \to +\infty} F^{-n} (x_0, y_0). \]
It follows that $F^{-1}(P_*) = P_*$ or equivalently, since $F$ is a diffeomorphism, 
$F(P_*) = P_*$. Thus $P_*$ must be a fixed point of $F$. However, as shown above, the segments $l_3$ and $l_4$ are invariant and do not contain fixed points. Therefore 
$P_* = (0,0)$. Thus the stable manifold $W^s(\pi,0)$ establishes the desired heteroclinic connection $\gamma^{L}$ between $(0,0)$ and $(\pi,0)$.

To conclude the construction of the heteroclinic curves in $S_{10}$, we note that $\gamma^{L}$  separates the interior of $s_{1}$ in two open sets
that we call $s_{1}^{+}$, above the heteroclinic, and $s_{1}^{-}$, below the
same heteroclinic. Again using the fact that $\varphi$ is positive in 
$\mbox{int}(s_1)$. It follows that  all initial conditions in the open
region $s_{1}^{+}$  exit that region under the iteration by crossing the
edge $l_{1}$, and  initial conditions in $s_{1}^{-}$ exit the region crossing the edge $l_{2}$. As previously stated, the internal symmetry established in Proposition \ref{Prop: S1Rot} then implies the corresponding result for the heteroclinic connection 
$\gamma^{R}$ between $(2\pi,0)$ and $(\pi,0)$ lying in $s_{4}$. Finally, the
closure of the set $\gamma^{L}\cup\gamma^{R}=\eta$ splits $S_{10}$ in two
invariant sets for the flow of $F$, that we called above $S_{10}^{+}$ and
$S_{10}^{-}$.

\end{proof}

We now turn to the proof of Theorem \ref{Thm: Main2}. Since most of the computations 
are analogous to the ones in the proof of Theorem \ref{Thm: Main},  we sketch the arguments without entering full details.

\begin{proof}
 We now consider the square $s_{2}$ depicted in Figure \ref{fig: 06}. 
This square is positively invariant for the map $F$ since the perturbation field
$\varphi$ points towards the inside of $s_{2}$ in its bottom sides, while the
top sides are $F$-invariant by virtue of Lemma \ref{lemma_1}. On the other hand, 
the second component of the perturbation field $f(y,x) - y=\varphi(y,x)$ is always positive, 
pointing to the top
vertex of $s_{2}$. 
The global setting in $s_{2}$ is thus similar to the
setting of Theorem \ref{Thm: Main}, and the orbits must approach the
top edges of $s_{2}$. The only possible accumulation point must again
be a fixed point, which is now $\left(  \pi,\pi\right)  $. Thus  we obtain a heteroclinic
connection between $\left(  \pi,0\right)  $ and $\left(  \pi,\pi\right)  $. 
\end{proof}

\begin{remark}
Note that, in the proof above,  all the orbits inside $s_{2}$ converge towards 
$\left(  \pi,\pi\right)
$. Moreover, all the initial conditions inside $s_{2}$ converge to $\left(
\pi,\pi\right)  $ under the iteration of $F$.
\end{remark}

\begin{corollary}
In the conditions of Theorems \ref{Thm: Main} and \ref{Thm: Main2}, the set $\text{int}\left(  s_{1}\right)  $ is negatively invariant , i.e., all  initial conditions inside $s_{1}$ and above the heteroclinic $\gamma^{L}$  leave this region under 
iteration of $F$, crossing towards $s_{2}$; all initial conditions below the heteroclinic
$\gamma^{L}$ inside $s_{1}$ leave this region towards $s_{3}$ under the
iteration.
\item 
\end{corollary}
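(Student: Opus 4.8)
The plan is to derive the statement as a direct consequence of the two heteroclinic theorems just proved, together with the positivity of $\varphi$ on $\mbox{int}(s_1)$ and the invariance of the two left edges $l_3$ and $l_4$. First I would recall the geometry established in the proof of Theorem \ref{Thm: Main}: the heteroclinic $\gamma^L$ is precisely the portion of $W^s(\pi,0)$ lying in $s_1$, it joins $(0,0)$ to $(\pi,0)$, and it splits $\mbox{int}(s_1)$ into the two open sets $s_1^+$ (above $\gamma^L$) and $s_1^-$ (below $\gamma^L$). Since $\gamma^L$ is a union of orbit pieces, each of $s_1^+$ and $s_1^-$ is itself forward-invariant \emph{as long as an orbit stays in $s_1$}; the claim to be proved is that no orbit can stay in $s_1$ forever, and moreover that it must leave through the prescribed edge.

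Next I would argue the escape. From $f(x,y)-x = a\varphi(x,y) > 0$ on $\mbox{int}(s_1)$, the first coordinate of any orbit starting in $\mbox{int}(s_1)$ is strictly increasing as long as the orbit remains there; hence it cannot accumulate in $\mbox{int}(s_1)$ (the only candidate $\omega$-limit points would be fixed points, and the only fixed points touching $\overline{s_1}$ are the corners $(0,0)$ and $(\pi,0)$, both on the boundary). The edges $l_3$, $l_4$ are invariant lines by Proposition \ref{prop_translations}, so an orbit from the interior cannot cross them; the only remaining way out is across the interiors of the right edges $l_1$ or $l_2$. This is exactly the dichotomy recorded in the text for the full edges $l_1, l_2$: $F(l_1) \subset s_2$ and $F(l_2) \subset s_3$. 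To pin down \emph{which} edge a given interior orbit uses, I would invoke that $\gamma^L$ is a genuine separatrix: an orbit in $s_1^+$ cannot cross $\gamma^L$ (orbits of a diffeomorphism do not intersect), so it stays in the closure of $s_1^+$ until it exits $s_1$; since its exit point lies on $l_1 \cup l_2$ and $l_2$ abuts $s_1^-$ while $l_1$ abuts $s_1^+$, continuity forces the exit across $l_1$, and then $F$ carries it into $s_2$. Symmetrically, orbits in $s_1^-$ exit across $l_2$ into $s_3$. This establishes that $\mbox{int}(s_1)$ is negatively invariant in the stated sense.

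The only genuinely delicate point is ruling out the degenerate scenario in which an orbit in, say, $s_1^+$ limits onto the saddle $(\pi,0)$ from inside $s_1$ without ever exiting through $l_1$ — i.e., converging to $(\pi,0)$ along the interior. But $W^s(\pi,0) \cap \mbox{int}(s_1) = \gamma^L$ by the identification made in the proof of Theorem \ref{Thm: Main}, so no interior point of $s_1^+$ lies on the stable manifold of $(\pi,0)$; hence no such orbit can converge to $(\pi,0)$, and the monotonicity of the first coordinate then forces a finite-time exit across $l_1$. The same reasoning applies to $s_1^-$ with $l_2$. I expect this identification of $\gamma^L$ with the full interior trace of $W^s(\pi,0)$ — already in hand from Theorem \ref{Thm: Main} — to be the crux; once it is invoked, the remainder is the monotonicity-plus-invariant-edges argument, which is entirely routine.
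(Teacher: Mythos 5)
Your proposal is correct and follows essentially the same route as the paper: the paper's proof of this corollary is a one-line appeal to the end of the proof of Theorem \ref{Thm: Main}, where exactly your ingredients appear --- positivity of $\varphi$ on $\text{int}(s_1)$ forcing escape, invariance of the left edges $l_3$, $l_4$, the computations showing $F(l_1)\subset s_2$ and $F(l_2)\subset s_3$, and the separatrix role of $\gamma^L$. Your additional care in ruling out interior convergence to the saddle via the identification $W^s(\pi,0)\cap\text{int}(s_1)=\gamma^L$ is a welcome elaboration of a step the paper leaves implicit, but it is not a different argument.
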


\begin{proof}
The statement is an immediate consequence of the proof of Theorem \ref{Thm: Main}. 
\end{proof}

\begin{corollary} In the conditions of Theorems \ref{Thm: Main} and \ref{Thm: Main2}, the point $\left(  \pi,\pi\right)  $ is an attractor and $\text{int}\left(  s_{2}\right)  $
belongs to its basin of attraction. A similar results holds for $\left(
\pi,-\pi\right)  $ and $\text{int}\left(  s_{3}\right)  $.
\end{corollary}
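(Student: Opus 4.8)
The plan is to obtain the statement by combining three facts already in hand: the local classification of the fixed point $P_3=(\pi,\pi)$, the global convergence of orbits in $\text{int}(s_2)$ established via Theorem~\ref{Thm: Main2} and the Remark following it, and the internal symmetry of the square $S_{10}$ recorded in Proposition~\ref{Prop: S1Rot}. First I would recall that, according to the linear stability table, the eigenvalues of $DF$ at $P_3=(\pi,\pi)$ are $1-a$ and $1-3a$, both of which lie in $(0,1)$ since $0<a<\tfrac{1}{6}$; hence $P_3$ is a hyperbolic sink, in particular asymptotically stable, so it is an attractor with an open basin of attraction $\mathcal{B}(\pi,\pi)$. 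The Remark following the proof of Theorem~\ref{Thm: Main2} asserts that every orbit of $F$ with initial condition in $\text{int}(s_2)$ converges to $(\pi,\pi)$; by the very definition of basin of attraction this already gives $\text{int}(s_2)\subseteq\mathcal{B}(\pi,\pi)$, which is the first assertion of the corollary.

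For the ``similar result'' I would transport this conclusion by the internal symmetry $\Theta:=T_{10}\circ\Sigma$, given explicitly by $\Theta(x,y)=(2\pi-x,-y)$. The computation already carried out in the proof of Proposition~\ref{Prop: S1Rot} shows that $\Theta$ commutes with $F$, so $\Theta$ belongs to the symmetry group of $F$ and Proposition~\ref{prop_invariance} applies to it. A direct check on the four vertices of $s_2$, namely $(\tfrac{\pi}{2},\tfrac{\pi}{2})$, $(\pi,\pi)$, $(\tfrac{3\pi}{2},\tfrac{\pi}{2})$ and $(\pi,0)$, gives as images under $\Theta$ the points $(\tfrac{3\pi}{2},-\tfrac{\pi}{2})$, $(\pi,-\pi)$, $(\tfrac{\pi}{2},-\tfrac{\pi}{2})$ and $(\pi,0)$, which are exactly the vertices of $s_3$; since $\Theta$ is affine it follows that $\Theta(s_2)=s_3$ and $\Theta(\text{int}(s_2))=\text{int}(s_3)$. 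By Proposition~\ref{prop_invariance}, $\Theta$ carries the orbit through any $X_0$ onto the orbit through $\Theta(X_0)$, so every orbit starting in $\text{int}(s_3)=\Theta(\text{int}(s_2))$ is the $\Theta$-image of an orbit in $\text{int}(s_2)$ and hence converges to $\Theta(\pi,\pi)=(\pi,-\pi)$. Finally, $(\pi,-\pi)$ is itself a hyperbolic sink: it is the representative $P_3^{0,-1}$ of the family $P_3^{l,k}$, so the $2\pi$-periodicity of $F$ endows it with the same eigenvalues $1-a$ and $1-3a$ (equivalently, conjugation by the linear map $\Theta$ preserves the spectrum of the linearisation). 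Therefore $(\pi,-\pi)$ is an attractor and $\text{int}(s_3)$ lies in its basin.

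I do not expect any genuine obstacle, since the proof amounts to bookkeeping with results already proved. The only point requiring a little care is to confirm that $\Theta=T_{10}\Sigma$ --- introduced in Proposition~\ref{Prop: S1Rot} merely as a symmetry of $S_{10}$ taken as a whole --- maps the sub-square $s_2$ onto $s_3$ and not onto $s_1$ or $s_4$; the vertex computation above settles this. One should also make sure that the convergence borrowed from the Remark is genuinely convergence to the point $(\pi,\pi)$ and not merely accumulation on the heteroclinic $\gamma^{L}\cup\gamma^{R}$: this is guaranteed because, as noted in the proof of Theorem~\ref{Thm: Main2}, $\varphi(y,x)$ is positive on $\text{int}(s_2)$, which forces the second coordinate to increase strictly along orbits, so that no $\omega$-limit set can lie in $\text{int}(s_2)$ and the only admissible limit on the $F$-invariant top edges of $s_2$ is the fixed point $(\pi,\pi)$ itself.
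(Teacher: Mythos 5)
Your proof is correct and follows essentially the same route as the paper: the first assertion is read off directly from the proof of Theorem~\ref{Thm: Main2}, which shows that every orbit starting in $\text{int}(s_2)$ converges to the top vertex $(\pi,\pi)$, already known to be a hyperbolic sink. You are in fact more explicit than the paper, whose two-line proof leaves the $(\pi,-\pi)$/$\text{int}(s_3)$ case entirely implicit, whereas you verify that the symmetry $T_{10}\Sigma:(x,y)\mapsto(2\pi-x,-y)$ commutes with $F$, maps $s_2$ onto $s_3$ (correct vertex check), and sends $(\pi,\pi)$ to $(\pi,-\pi)$, so the second statement follows by equivariance.
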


\begin{proof}
This is a simple consequence of the proof of theorem \ref{Thm: Main2}. The top
vertex of $s_{2}$ is $\left(  \pi,\pi\right)  $, and all the initial
conditions inside $s_{2}$ will converge to this vertex under iteration of
$F$.
\end{proof}

\noindent The phase portrait for $a=0.1$  and the dynamics in $S_{10}^{+}$ and $S_{10}^{-}$ are depicted in
Figure \ref{Fig: 07}.

\begin{figure}
[ptb]
\begin{center}
\includegraphics[
height=4.4495in,
width=4.3911in
]%
{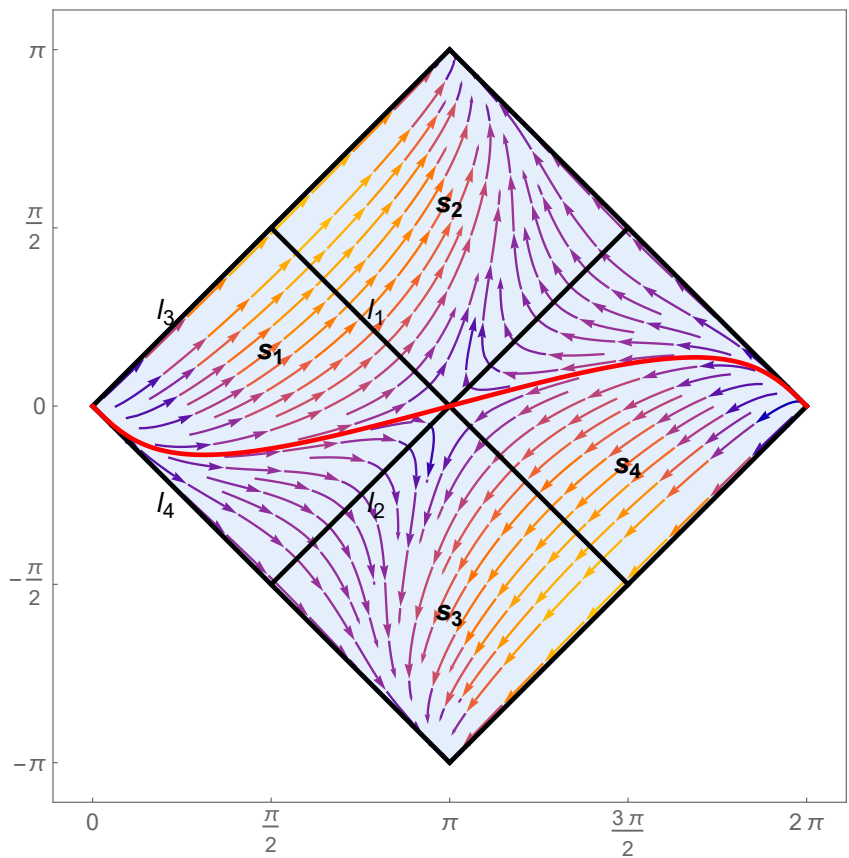}%
\caption{Dynamics in $S_{01}$ for $a=0.1$. This set is partitioned into two
separately invariant set by the curve $\eta$, closure of the union of the heteroclinics
$\gamma^{L}$ and $\gamma^{R}$.}%
\label{Fig: 07}%
\end{center}
\end{figure}

\section{Conclusion}

Theorem \ref{Thm: Main} ensures the existence of a separatrix curve $\eta=\overline{\gamma^{L}\cup\gamma^{R} \cup \{ (\pi, 0) \} }$ splitting $S_{10}$ into two separately invariant sets
$S_{10}^{+}$ and $S_{10}^{-}$. The set $\text{int}\left(  S_{10}^{+}\right)  $ above
the separatrix $\eta$ 
belongs to the basin of attraction of $\left(  \pi
,\pi\right)  $. 
Denoting by $L=\left\{  \left(
x,y\right)  :y=x,x\in\left]  0,\pi\right]  \right\} $ the  the top left edge of $S_{01}$,
the proof of Theorem \ref{Thm: Main} shows that $L$ is also in this basin of attraction. 
Applying now the symmetries (\ref{eq: sym1}), (\ref{eq: sym2}), (\ref{eq: sym3}) to
$\Xi=S_{10}\cup L$, we obtain all the rotations by $\frac{\pi}{2}k$, $k=0,1,2,3$
of $\Xi$ around $\left(  \pi,\pi\right) $ where the dynamics is equivalent to
the dynamics in $\Xi$. The union of these four sets is the basin of attraction of $\left(  \pi,\pi\right)$, giving the main result of this paper.

\bigskip

We complete the jigsaw puzzle by applying the symmetries obtained in Section
\ref{Subsubsec:symmetry}, 

which results in a complete invariant region
encircling $(\pi,\pi)$ and bounded by all the heteroclinics connecting the
points $(0,0)$, $(\pi,0)$, $(2\pi,0)$, $(2\pi,\pi)$, $(2\pi,2\pi)$, $(\pi
,2\pi)$, $(0,2\pi)$, $(0,\pi)$, and finally $(0,0)$ again. The interior of
this region has as its $\omega$-limit set the point $(\pi,\pi)$. The entire
real plane is covered by $2\pi$-periodic translations of this region, providing a tiling of the plane. The global dynamical significance of this tiling is that, modulo $2\pi$, all initial configurations not on the boundary of heteroclinics synchronize at the final state of 
mutual phase opposition, which may be considered a
rather natural result.

The complete phase diagram,  illustrated in Figure \ref{fig:06}, depicts the
heteroclinics connecting saddles to stable and unstable foci. The shadowed
bottom of the image corresponds to the region of interest for our analysis,
with the rest of the proof following from symmetry arguments.

\begin{figure}[ptb]
\begin{center}
\includegraphics[height=4.4381in,width=4.3903in]{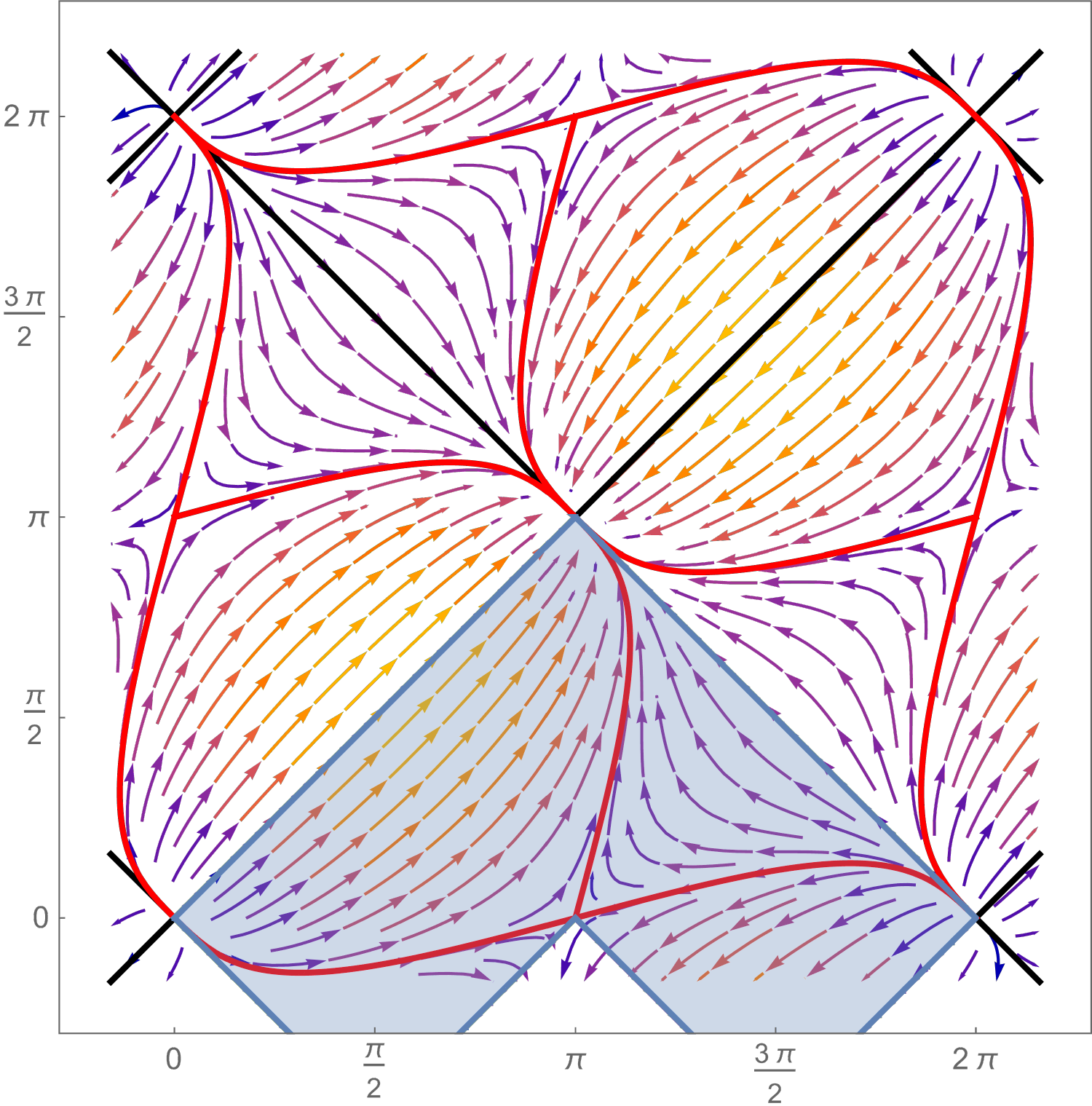}
\end{center}
\caption{Complete phase diagram with heteroclinics connecting saddles to
stable and unstable foci. Image generated numerically for $a=0.1$. We concentrate on the shaded bottom of the figure; the remainder of the proof follows by symmetry.}%
\label{fig:06}%
\end{figure}

Experimental 
verification of the proposed \textit{line model} for three
oscillators, as done in \cite{OlMe} for two oscillators, would be valuable.
Future research aims to extend these findings to oscillator strings and their
application to bi-dimensional and tri-dimensional swarms
\cite{o2017oscillators}. The outcomes hold potential for studying weakly
interacting insects or slow synchronization in neural networks using the
nearest neighbor model, akin to the integrate-and-fire models of Kuramoto
\cite{Campbell1997,Campbell1999,Kuramoto1975,Mirollo1990,Strogatz2000}.

%

\bibliographystyle{abbrv}
\bibliography{BibloH2}

\end{document}